\let\OLDthebibliography\thebibliography
\renewcommand\thebibliography[1]{
  \OLDthebibliography{#1}
  \setlength{\parskip}{3pt}
  \setlength{\itemsep}{0pt plus 0.3ex}
}
\def\numberlikeadb{\global\def\theequation{\thesection.\arabic{equation}}}
\newtheorem{theorem}{Theorem}[section]
\newtheorem{lemma}[theorem]{Lemma}
\newtheorem{proposition}[theorem]{Proposition}
\newtheorem{remark}[theorem]{Remark}
\newtheorem{assumption}[theorem]{Assumption}
\definecolor{orange}{rgb}{1,0.5,0}
\begin{document}

\title{An iterative technique for bounding derivatives of solutions of Stein equations}
\author{Christian D\"{o}bler\footnote{Universit\'{e} du Luxembourg, Unit\'{e} de Recherche en Math\'{e}matiques, Maison du Nombre, 4, Avenue de la Fonte, L-4364 Esch-sur-Alzette},\, Robert E. Gaunt\footnote{School of Mathematics, The University of Manchester, Manchester M13 9PL, UK}\, and
 Sebastian Vollmer\footnote{Mathematics Institute, Zeeman Building, University of Warwick, Coventry, CV4 7AL, UK}  
}

\date{\today} 
\maketitle

\vspace{-10mm}

\begin{abstract}We introduce a simple iterative technique for bounding derivatives of solutions of Stein equations $Lf=h-\mathbb{E}h(Z)$, where $L$ is a linear differential operator and $Z$ is the limit random variable.  Given bounds on just the solutions or certain lower order derivatives of the solution, the technique allows one to deduce bounds for derivatives of any order, in terms of supremum norms of derivatives of the test function $h$.  This approach can be readily applied to many Stein equations from the literature.  We consider a number of applications; in particular, we derive new bounds for derivatives of any order of the solution of the general variance-gamma Stein equation.  Finally, we present a connection between Stein equations and Poisson equations, from which we first recognised the importance of the iterative technique to Stein's method.
\end{abstract}

\noindent{{\bf{Keywords:}}} Stein's method; Stein equation; differential equation; Poisson equation; variance-gamma distribution

\noindent{{{\bf{AMS 2010 Subject Classification:}}} 60F05; 35A24

\section{Introduction} Stein's method is a powerful technique for assessing the distributional distance between a random variable of interest $W$ and a limit random variable $Z$.  Stein \cite{stein} originally developed the method for normal approximation, and the method was adapted to Poisson approximation by Chen \cite{chen 0}. 

\subsection{Outline of Stein's method} 

For a general target random variable $Z$, Stein's method involves three steps; see \cite{reinert 0}.  In the first step, a suitable characterisation of the target distribution is obtained; namely, a Stein operator $L$ such that the random variable $X$ is equal in law to $Z$ if and only if
\begin{equation}\label{chareqn}\mathbb{E}[Lf(X)]=0
\end{equation}
for all functions $f$ belonging to some measure determining class. For continuous random variables, $L$ is a differential operator; for discrete random variables, $L$ is a difference operator.  Such characterisations have often been derived via Stein's density approach \cite{stein2}, \cite{stein3} or the generator approach of Barbour and G\"{o}tze \cite{barbour2}, \cite{gotze}.  The scope of the density approach has recently been extended by \cite{ley}, and other techniques for obtaining Stein characterisations are discussed in that work. 

The characterisation (\ref{chareqn}) leads to the so-called Stein equation:
\begin{equation}\label{steineqn}Lf(x)=h(x)-\mathbb{E}h(Z),
\end{equation}
where the test function $h$ is real-valued.  The second step of Stein's method, which will be the focus of this paper, concerns the problem of obtaining a solution $f$ to the Stein equation (\ref{steineqn}) and then establishing estimates for $f$ and some of its lower order derivatives (for continuous distributions).

Evaluating (\ref{steineqn}) at a random variable $W$ and taking expectations gives
\begin{equation}\label{steinexpectation}\mathbb{E}h(W)-\mathbb{E}h(Z)=\mathbb{E}[Lf(W)],
\end{equation}
and thus the problem of bounding the quantity $\mathbb{E}h(W)-\mathbb{E}h(Z)$ reduces to solving (\ref{steineqn}) and bounding the right-hand side of (\ref{steinexpectation}).  The third step of Stein's method concerns the problem of bounding the expectation $\mathbb{E}[Lf(W)]$.  For continuous limit distributions, such bounds are usually obtained via Taylor expansions and coupling techniques.   

For many classical distributions, the problem of obtaining the first two necessary ingredients is relatively tractable.  As a result, over the years, Stein's method has been adapted to many standard distributions, including the beta \cite{dobler beta}, \cite{goldstein4}, gamma \cite{gaunt chi square}, \cite{luk}, exponential \cite{cfr11}, \cite{fulmanross13}, \cite{pekoz1}, Laplace \cite{pike} and, more generally, the class of variance-gamma distributions \cite{gaunt vg}.  The method has also been adapted to distributions arising from specific problems, such as preferential attachment graphs \cite{pekoz}, the Curie-Weiss model \cite{cs11} and statistical mechanics \cite{el10}, \cite{em14}.  For a comprehensive overview of the literature, see \cite{ley}.

\subsection{Estimates for solutions of Stein equations}

The Stein equations of many classical distributions, such as the normal, beta and gamma, are linear first order ODEs with simple coefficients.  As a result, the problem of solving the Stein equation and bounding the derivatives of the solution is reasonably tractable.

In fact, for Stein characterisations arising from the generator approach, the Stein operator $L$ associated with the distribution $\mu$ can be viewed as the generator of a Markov process with stationary distribution $\mu$.  For example, the classical standard normal Stein operator $Lf(x)=f'(x)-xf(x)$ is the generator of an Ornstein-Uhlenbeck process (if we take $f=g'$).  In such cases, by standard theory of Markov processes (\cite{kurtz}, Proposition 1.5), the solution to the Stein equation is $f=g'$, where
\begin{equation}\label{gensoln} g(x)=-\int_0^{\infty}\{P_th(x)-\mathbb{E}h(Z)\}\,\mathrm{d}t,
\end{equation}
if the integral exists.  Here, $P_t$, defined by $P_tf(x)=\mathbb{E}_xf(X_t)$, is the transition semigroup operator.  If it is permissible  to differentiate under the integral sign, then
\begin{equation*} f^{(k-1)}(x)=-\int_0^{\infty}(P_th)^{(k)}(x)\,\mathrm{d}t,\quad k\geq1.
\end{equation*}
In cases where a formula is available for $(P_th)^{(k)}(x)$, it is often possible to easily deduce uniform bounds for the derivatives of $f$ in terms of supremum norms of derivatives of $h$.  This approach has yielded uniform bounds for derivatives of any order of the solutions of the multivariate normal \cite{barbour2}, \cite{goldstein1} and gamma \cite{luk} Stein equations.

However, for many Stein equations in the literature, this approach cannot be applied or is not tractable.  This can either be because the Stein operator cannot be viewed as the generator of a Markov diffusion process (for example, the Stein operator for the product of three of more independent standard normals \cite{gaunt pn}) or because a simple formula is not available for $(P_th)^{(k)}(x)$ (for example, the variance-gamma Stein operator; see Section 3.3.2 of \cite{gaunt thesis}).  In such cases, estimates for the solution and its derivatives are usually obtained via analytic methods, by which formulas for the solution and its derivatives are derived, and these quantities are then bounded through technical calculations.  This is often a tedious and lengthy process, which usually only yields bounds on lower order derivatives. 

In recent years, Stein's method has been adapted to a number of distributions for which the Stein operator is a higher order linear differential operator.  Second order operators have been obtained for the Laplace \cite{pike}, variance-gamma \cite{gaunt vg} and PRR (Pek\"oz-R\"ollin-Ross) distributions \cite{pekoz}, whilst operators of order greater than two have been for the distribution of mixed products of independent normal, beta and gamma random variables \cite{gaunt pn}, \cite{gaunt ngb}, \cite{gms16}, and for the distribution of a general linear combination of independent centered gamma random variables \cite{aaps16}.

These higher order Stein operators have simple coefficients, so bounding the quantity $\mathbb{E}[Lf(W)]$ via various coupling techniques is often quite tractable.  However, solving the Stein equation and obtaining estimates for the derivatives of the solution is often a challenging problem.  Indeed, to date, there do not exist uniform bounds for the derivatives of the solution of the general variance-gamma Stein equation.

This paper is to some extent motivated by this challenge.  In this paper, we introduce a simple iterative technique for bounding derivatives of solutions of Stein equations.  We introduce the technique in Section 2 and consider applications of it in Section 3.  Given a bound on just the solution or some of its lower order derivatives, the iterative procedure allows one to deduce bounds for derivatives of any order in terms of supremum norms of the derivatives of $h$.  The technique allows us to obtain bounds for derivatives of any order of the solutions of many of the Stein equations from the literature.  Such bounds have previously only been obtained for the solutions of the Stein equations for the normal and multivariate normal \cite{daly}, \cite{gaunt rate}, \cite{goldstein1}, \cite{lefevre}, gamma \cite{gaunt chi square}, \cite{luk} and beta distributions \cite{dobler beta}.  In this paper, we restrict our attention to Stein equations of continuous distributions, but it should be noted that our iterative technique can be applied analogously to solutions of Stein equations for discrete distributions.  However, one can obtain bounds on higher order forward differences of the solution trivially via the triangle inequality given either a bound on the solution or its first forward difference, without needing to apply our iterative technique.

The power of the technique is particularly well demonstrated if we consider the variance-gamma Stein equation.  For this Stein equation, our iterative procedure reduces the problem of bounding derivatives of any order of the solution to just bounding the solution and its first derivative.  This is a tractable problem, and we obtain uniform bounds for these quantities in Section 3.  On the other hand, bounding even the second and third derivatives via analytic calculations may require very involved calculations; see Section 3 and Appendix D of \cite{gaunt thesis}. 

When the Stein operator $L$ is a second order (possibly degenerate)
elliptic differential operator, then the Stein equation coincides with
the Poisson equation.  It was in the context of the Poisson equation that we first recognised the importance of the iterative technique to Stein's method.  Indeed, in Section 4, we consider the connection between Stein equations and Poisson equations, and consider an application of the iterative procedure to Poisson equations in Section 4.2 

\subsection{Outline of the paper}

In Section 2, we introduce a general iterative technique for bounding derivatives of any order of solutions of Stein equations.  In Lemmas \ref{lemma2.1}--\ref{lemma2.3}, we present general bounds for solutions of certain classes of Stein equations.  These lemmas cover the normal, gamma, beta, Student's $t$, inverse-gamma, PRR and variance-gamma Stein equations.  In Section 3, we apply these lemmas to obtain estimates for $n$-th order derivatives of the solutions to these Stein equations.  Many of these bounds are new.  The technique also applies equally well to many Stein equations not covered by Lemmas \ref{lemma2.1}--\ref{lemma2.3}.  We demonstrate this by considering the application of the technique to the Stein equations for the multivariate normal distribution, as well as to a distribution with density proportional to $e^{-x^4/12}$.  In Section 4, we consider the connection between Stein equations and Poisson equations.  We also consider the iterative technique in the context of Poisson equations in Section 4.2.  For the sake of brevity, some formulas are given in Section 3 without proof.  Their simple proofs are given in Appendix A.

\section{An iterative approach to bounding solutions of Stein equations}

Consider the ordinary differential equation
\begin{equation}\label{ode1}Lf(x)=h(x),
\end{equation}
where $L$ is a linear differential operator of the form
\begin{equation*}Lf(x)=\sum_{j=0}^ma_j(x)f^{(j)}(x).
\end{equation*}
In Stein's method, one frequently encounters linear ODEs of the type (\ref{ode1}). Indeed, usually one is interested in some distribution $\mu$, which is characterised by $L$, meaning that 
\begin{equation*}
 \mathbb{E}[Lf(X)]=0,
\end{equation*}
for all sufficiently smooth functions $f$, if and only if $X$ has distribution $\mu$. One is then interested in solving \eqref{ode1} for $f$ in terms of $h$, which will be supposed to be centred with respect to $\mu$, and in bounding $f$ and derivatives of $f$ in terms of $h$ and its derivatives. In this paper, we address the latter problem within the following general set-up: Suppose that we are given a finite or infinite sequence $(\mu_k,L_k)$ of distributions $\mu_k$ and linear differential operators 
$L_k$ of the form 
\begin{equation}\label{formLk}
L_kf(x)=\sum_{j=0}^ma_{k,j}(x)f^{(j)}(x)
\end{equation}
such that $L_k$ is characterizing for $\mu_k$ in the above mentioned sense.  We shall assume that $a_{k+1,m}(x)=a_{k,m}(x)=\cdots=a_{0,m}(x)$.  This mild assumption (which holds for all Stein equations for univariate distributions in the literature) allows us to make the following important assumption.  We assume that the operators $L_k$ are linked by the relation 
\begin{equation}\label{link}
 \frac{\mathrm{d}}{\mathrm{d}x} L_kf(x)=L_{k+1}f'(x)-T_k f(x),
\end{equation}
where $T_k$ is some other linear differential operator. We will assume that all operators $L_k$ have the same order $m$ and, then, $T_k$ will have the form 
\begin{equation}\label{formTk}
 T_kf(x)=\sum_{j=0}^{m}b_{k,j}(x)f^{(j)}(x),
\end{equation}
for some coefficients $b_{k,j}(x)$ (which are in fact linear combinations of derivatives of the coefficients $a_{k,j}(x)$ of $L_k$).  That $T_k$ has order $m$ can be seen because the leading terms of $\frac{\mathrm{d}}{\mathrm{d}x} L_kf(x)$ and $L_{k+1}f'(x)$ are given, respectively, by $a_{k,m}(x)f^{(m+1)}(x)$ and $a_{k+1,m}f^{(m+1)}(x)$, which are equal because we assume that $a_{k,m}(x)=a_{k+1,m}(x)$.  Thus, if $h_k$ is such that 
\begin{equation}\label{cent}
\mu_k(h_k)=0,
\end{equation}
then, writing $f_k=f_{k,h_k}$ for a particular solution to 
\begin{equation}\label{odek}
 L_k f_k(x)=h_k(x),
\end{equation}
we find that $f_{k+1}=f_k'$ solves 
\begin{equation}\label{odek+1}
 L_{k+1}f_{k+1}(x)=h_k'(x)+T_k f_{k}(x).
\end{equation}
Often, given $h_k$ with \eqref{cent}, there is only one solution $f_{k,h_k}$ to \eqref{odek} which has nice boundedness properties. This is due to the fact that every other solution is obtained by adding to $f_{k,h_k}$ 
a solution to the corresponding homogeneous equation
\begin{equation*}
L_k g(x)=0, 
\end{equation*}
whose non-zero solutions (or one of their lower-order derivatives) often explode in a certain sense. In this general framework, we will always assume that there is one such distinguished solution $f_{k,h_k}$ to \eqref{odek}. 
The curious fact is that then, if the operators $L_m$ and $T_m$ are well-chosen,  usually, the right hand side 
\begin{equation*}
 h_{k+1}(x):=h_k'(x)+T_k f_{k,h_k}(x)
\end{equation*}
of \eqref{odek+1} is centred with respect to the measure $\mu_{k+1}$ and that $f_{k,h_k}'=f_{k+1,h_{k+1}}$. Given this, we can apply bounds holding for the solution $f_{k+1,h_{k+1}}$ in order to bound $f_{k,h_k}'$. 
Iterating this observation already gives the theoretical essence of our method. For ease of reference, we now state the technical conditions. 

\begin{assumption}\label{ass1}
\emph{ Assume that, for some $N\in\mathbb{N}\cup\{\infty\}$ and $m\in\mathbb{N}$, we are given a sequence $(\mu_k,L_k,T_k)$, $0\leq k< N+1$, of probability measures $\mu_k$ on $(\mathbb{R},\mathcal{B})$, 
 linear differential operators $L_k$ and $T_k$ of the 
 form \eqref{formLk} and \eqref{formTk}, respectively, such that $L_k$ is characterizing for $\mu_k$ and $\eqref{link}$ holds. Also, suppose that for each $0\leq k<N+1$ and each function $h_k$ such that $\mu_k(h_k)=0$, there 
 is a distinguished solution $f_{k,h_k}$ to the Stein equation \eqref{odek} and that the function $h_{k+1}:=h_k'+T_k f_{k,h_k}$ is integrable and centred with respect to $\mu_{k+1}$. Then, we know from \eqref{link} that 
 $f_{k,h_k}'$ solves $L_{k+1}f=h_{k+1}$ and we further assume that $f_{k,h_k}'=f_{k+1,h_{k+1}}$ is the distinguished solution of this equation.}
\end{assumption}

\begin{remark}\emph{Note that we do not state in general, what the attribute distinguished in this context precisely means. Indeed, for the general theory, it only has to be guaranteed that the choice of $f_{k,h_k}$ is unambiguous. In all our examples in the sequel except for the variance-gamma Stein equation when $0<r<1$ (see Section 3.1.7 for the p.d.f.), the distinguished solution will be the unique bounded solution. In case of the variance-gamma Stein equation when $0<r<1$, there are infinitely many bounded solutions to the Stein equation, for general bounded test function $h$.  However, if $h$ is bounded, then for all parameter values there is a unique solution $f$ to the variance-gamma Stein equation such that $f$ and $f'$ are bounded (see \cite{gaunt thesis}, Lemma 3.14). This then becomes the distinguished solution.}
\end{remark}

Now, we present three general lemmas, which provide general useful bounds under different additional conditions, if the operators $T_k$ have a special and simple form.  As we shall see in Section 3, these lemmas cover many of the Stein equations from the literature.  In the following lemmas, we set the empty product $\prod_{i=0}^{-1}a_i$ to be equal to $1$.

\begin{lemma}\label{lemma2.1} Suppose that Assumption \ref{ass1} holds and that there are constants $\alpha_j$, $0\leq j<N+1$, such that $T_jf=\alpha_j f$ holds for all $0\leq j<N+1$, where $f$ is distinguished solution according to Assumption \ref{ass1}. Also, assume that for some 
$0\leq n<N+1$ a function $h\in C_b^n(\mathbb{R})$ with $\mu_0(h)=0$ is given and write $f$ for $f_{0,h}$.
Then, writing $a_j:=|\sum_{i=0}^j\alpha_i|$, $0\leq j<N+1$:

\begin{enumerate}
\item[(i)] If there exist constants $C_l$, $0\leq l<N+1$, such that $\|f_{l,h_l}\|\leq C_l\|h_l\|$ holds for each $0\leq l<N+1$ and $h_l$ with $\mu_l(h_l)=0$, then we have
\begin{equation*}\|f^{(n)}\|\leq C_n\sum_{j=0}^n\bigg(\prod_{i=j}^{n-1}C_i a_{i}\bigg)\|h^{(j)}\|.
\end{equation*}

\item[(ii)]  If there exist constants $D_l$, $0\leq l<N+1$, such that $\|f_{l,h_l}'\|\leq D_l\|h_l\|$ holds for each $0\leq l<N+1$ and $h_l$ with $\mu_l(h_l)=0$, then we have, for $n=2k+1$,
\begin{equation*}\|f^{(2k+1)}\|\leq D_{2k}\sum_{j=0}^k\bigg(\prod_{i=j}^{k-1}D_{2i} a_{2i+1}\bigg)\|h^{(2j)}\|
\end{equation*}
and, for $n=2k$,
\begin{equation*}\|f^{(2k)}\|\leq D_{2k-1}\sum_{j=1}^k\bigg(\prod_{i=j}^{k-1}D_{2i-1} a_{2i}\bigg)\|h^{(2j-1)}\|+\|f\|\prod_{i=1}^kD_{2i-1} a_{2i-2}.
\end{equation*}
\item[(iii)] If $1\leq n<N+1$ and if there exist constants $E_l$, $0\leq l<N+1$, such that $\|f_{l,h_l}'\|\leq E_l\|h_l'\|$ holds for each $0\leq l<N+1$ and Lipschitz-continuous $h_l$ with $\mu_l(h_l)=0$, then we have
\begin{equation*}\|f^{(n)}\|\leq E_{n-1}\sum_{j=1}^n\bigg(\prod_{i=j}^{n-1}E_{i-1} a_{i-1}\bigg)\|h^{(j)}\|.
\end{equation*}

\end{enumerate}

\end{lemma}

\begin{proof}
From the hypothesis of the lemma we obtain inductively that
\begin{equation*}
 L_{j+1}f^{(j+1)}=h_{j+1}=h_j'+\alpha_j f^{(j)}=\ldots=h^{(j+1)}+\Bigl(\sum_{i=0}^j\alpha_{i}\Bigr) f^{(j)}
\end{equation*}
holds for each $0\leq j<N$. This, in particular implies that 
\begin{equation*}
 \|h_{j+1}\|\leq \|h^{(j+1)}\|+\Bigl|\sum_{i=0}^j\alpha_i\Bigr|\|f^{(j)}\|
\end{equation*}
for each $0\leq j<N$.

(i) The result follows from a simple induction on $n$.  Suppose the result is true for all $l\leq n$.  Then
\begin{align*}\|f^{(n+1)}\|&\leq C_{n+1}\|h^{(n+1)}+\Bigl(\sum_{i=0}^n\alpha_{i}\Bigr) f^{(n)}\| \\
&\leq C_{n+1}\|h^{(n+1)}\|+C_{n+1} a_{n}\|f^{(n)}\| \\
&\leq C_{n+1}\|h^{(n+1)}\|+C_{n+1} a_{n}\cdot C_n\sum_{j=0}^n\bigg(\prod_{i=j}^{n-1}C_i a_{i}\bigg)\|h^{(j)}\| \\
&= C_{n+1}\sum_{j=0}^{n+1}\bigg(\prod_{i=j}^{n}C_i a_{i}\bigg)\|h^{(j)}\|,
\end{align*}  
and so the result has been proved by induction on $n$.

(ii) We prove the result by induction by considering the cases of odd and even $n$ separately.  Firstly, suppose the result holds for all odd $l\leq 2k+1$. Then
\begin{align*}\|f^{(2k+3)}\|&\leq D_{2k+2}\|h^{(2k+2)}+\Bigl(\sum_{i=0}^{2k+1}\alpha_{i}\Bigr) f^{(2k+1)}\| \\
&\leq D_{2k+2}\|h^{(2k+2)}\|+D_{2k+2} a_{2k+1}\|f^{(2k+1)}\| \\
&\leq D_{2k+2}\|h^{(2k+2)}\|+D_{2k+2} a_{2k+1}\cdot D_{2k}\sum_{j=0}^k\bigg(\prod_{i=j}^{k-1}D_{2i} a_{2i+1}\bigg)\|h^{(2j)}\| \\
&= D_{2k+2}\sum_{j=0}^{k+1}\bigg(\prod_{i=j}^{k}D_{2i} a_{2i+1}\bigg)\|h^{(2j)}\|,
\end{align*}
as required.  Now, we suppose the result holds for all even $l\leq 2k$.  Then
\begin{align*}\|f^{(2k+2)}\|&\leq D_{2k+1}\|h^{(2k+1)}+ \Bigl(\sum_{i=0}^{2k}\alpha_{i}\Bigr) f^{(2k)}\| \\
&\leq D_{2k+1}\|h^{(2k+1)}\|+D_{2k+1} a_{2k}\|f^{(2k)}\| \\
&\leq D_{2k+1}\|h^{(2k+1)}\|+D_{2k+1} a_{2k}\cdot D_{2k-1}\sum_{j=1}^k\bigg(\prod_{i=j}^{k-1}D_{2i-1} a_{2i}\bigg)\|h^{(2j-1)}\|\\
&\quad+D_{2k+1} a_{2k-1}\|f\|\prod_{i=1}^{k-1}D_{2i-1} a_{2i-2} \\
&=D_{2k+1}\sum_{j=1}^{k+1}\bigg(\prod_{i=j}^{k}D_{2i-1} a_{2i}\bigg)\|h^{(2j-1)}\|+\|f\|\prod_{i=1}^{k}D_{2i-1} a_{2i-2},
\end{align*}
which completes the proof. The proof of (iii) is very similar to that of (i) and is therefore omitted. 
\end{proof} 

In the following lemma we suppose now that $T_jf=\alpha_{j} f'$ instead of $T_jf=\alpha_{j} f$. The proof is analogous to that of Lemma \ref{lemma2.1} and is omitted.

\begin{lemma}\label{lemma2.2}
Suppose that Assumption \ref{ass1} holds and that there are constants $\alpha_j$, $0\leq j<N+1$, such that $T_jf=\alpha_{j} f'$ holds for all $0\leq j<N+1$. Also, assume that for some 
$0\leq n<N+1$ a function $h\in C_b^n(\mathbb{R})$ with $\mu_0(h)=0$ is given and write $f$ for $f_{0,h}$.
Then, writing $a_j:=|\sum_{i=0}^j \alpha_i|$, $0\leq j<N+1$:

\begin{enumerate}
\item[(i)] If there exist constants $C_l$, $0\leq l<N+1$, such that $\|f_{l,h_l}'\|\leq C_l\|h_l\|$ holds for each $0\leq l<N+1$ and $h_l$ with $\mu_l(h_l)=0$, then we have
\begin{equation*}\|f^{(n)}\|\leq C_{n-1}\sum_{j=0}^{n-1}\bigg(\prod_{i=j}^{n-2}C_i a_{i}\bigg)\|h^{(j)}\|.
\end{equation*}

\item[(ii)] If there exists constants $D_l$, $0\leq l<N+1$, such that $\|f_{l,h_l}''\|\leq D_l\|h_l\|$ holds for each $0\leq l<N+1$ and $h_l$ with $\mu_l(h_l)=0$, then we have, for $1\leq n=2k+1$,
\begin{equation*}
\|f^{(2k+1)}\|\leq D_{2k-1}\sum_{j=1}^k\bigg(\prod_{i=j}^{k-1}D_{2i-1} a_{2i}\bigg)\|h^{(2j-1)}\|+\|f'\|\prod_{i=1}^kD_{2i-1} a_{2i-2}
\end{equation*}
and, for $2\leq n=2k$,
\begin{equation*}\|f^{(2k)}\|\leq D_{2k-2}\sum_{j=1}^{k-1}\bigg(\prod_{i=j}^{k-2}D_{2i} a_{2i+1}\bigg)\|h^{(2j)}\|.
\end{equation*}
\end{enumerate}
\end{lemma}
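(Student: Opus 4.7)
My plan is to mirror the proof of Lemma \ref{lemma2.1} step for step, exploiting the fact that the only structural difference is the form of $T_j$. First, following the proof of Lemma \ref{lemma2.1}, an induction on $j$ based on the link relation \eqref{link} delivers the identity
\begin{equation*}
L_{j+1}f^{(j+1)}=h^{(j+1)}+T_jf^{(j)}.
\end{equation*}
Substituting the present hypothesis $T_jf^{(j)}=\alpha_{j+1}f^{(j+1)}$ and shifting the index, I can read off the estimate $\|h_j\|\leq\|h^{(j)}\|+a_j\|f^{(j)}\|$ for the right-hand side of the level-$j$ Stein equation.

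For part (i) I would then combine the hypothesis $\|f_{j,h_j}'\|\leq C_j\|h_j\|$ with the identification $f^{(j+1)}=f_{j,h_j}'$ provided by Assumption \ref{ass1}, which produces the one-step recursion
\begin{equation*}
\|f^{(j+1)}\|\leq C_j\|h^{(j)}\|+C_ja_j\|f^{(j)}\|.
\end{equation*}
Starting from the base case $\|f'\|\leq C_0\|h\|$ (the hypothesis at $k=0$, since $h_0=h$), a straightforward induction on $n$, structured exactly as the one in the proof of Lemma \ref{lemma2.1}(i), telescopes this recursion into the claimed sum-of-products form.

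For part (ii) the hypothesis $\|f_{j,h_j}''\|\leq D_j\|h_j\|$ together with $f^{(j+2)}=f_{j,h_j}''$ yields the two-step recursion
\begin{equation*}
\|f^{(j+2)}\|\leq D_j\|h^{(j)}\|+D_ja_j\|f^{(j)}\|,
\end{equation*}
so the induction must be split along the even and odd subsequences of orders. Along odd $n=2k+1$ the recursion terminates at $\|f'\|$, which appears as an irreducible term in the final bound; along even $n=2k$ it terminates at $\|f^{(2)}\|\leq D_0\|h\|$, producing the lowest-order $\|h\|$ contribution. In each case the remaining work is just bookkeeping, carried out exactly as in the corresponding telescoping in the proof of Lemma \ref{lemma2.1}(ii).

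I do not expect any genuine obstacle; the argument is entirely mechanical and parallels Lemma \ref{lemma2.1} point by point. The one aspect requiring care is the systematic index shift caused by the appearance of $\alpha_{j+1}$ rather than $\alpha_j$ in the definition of $T_j$: this is what causes the constants in the final bounds to feature $a_{i+1}$ (and, in the odd case of (ii), $a_{2i+1}$) where the bounds in Lemma \ref{lemma2.1} carried $a_i$, and it must be tracked carefully through the telescoping to land on the correct indices in the statement.
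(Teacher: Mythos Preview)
Your approach is exactly what the paper intends: it explicitly omits the proof of Lemma~\ref{lemma2.2}, stating only that it is analogous to that of Lemma~\ref{lemma2.1}. Mirroring that argument with the one-step (resp.\ two-step) recursion for part (i) (resp.\ part (ii)), together with the index shift induced by $T_jf=\alpha_{j+1}f'$ and the even/odd split in (ii), is precisely the intended route.
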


In the following lemma we generalise the settings of Lemmas \ref{lemma2.1} and \ref{lemma2.2} to suppose that $T_jf=\alpha_{j+1} f+\beta_{j+1}f'$.

\begin{lemma}\label{lemma2.3}
Suppose that Assumption \ref{ass1} holds and that there are constants $\alpha_j$ and $\beta_j$, $1\leq j<N+2$, such that $T_jf=\alpha_{j+1} f+\beta_{j+1}f'$ holds for all $0\leq j<N+1$. Also, assume that for some 
$1\leq m<N+1$ a function $h\in C_b^m(\mathbb{R})$ with $\mu_0(h)=0$ is given and write $f$ for $f_{0,h}$.

If there exist constants $K_j$ and $D_j$, $0\leq j<N+1$, such that $\|f_{j,h_j}\|\leq K_j\|h_j\|$ and $\|f_{j,h_j}'\|\leq D_j\|h_j\|$ hold for each $0\leq j<N+1$ and $h_k$ with $\mu_k(h_k)=0$, then,
with $a_j:=|\sum_{i=0}^j\alpha_i|$ and $b_j:=|\sum_{i=0}^j\beta_i|$, we have that 
\begin{align*}
 \|f^{(m+1)}\|&\leq\sum_{j=0}^{m-1} D_{m-j}\bigg(\sum_{l\in I_j} A_{j,l}^{(m)}\bigg)\|h^{(m-j)}\|+\bigg(D_0\sum_{l\in I_{m}}A_{m,l}^{(m)}+a_1K_0D_1\sum_{l\in I_{m-1}}A_{m-1,l}^{(m)}\bigg)\|h\|.
\end{align*}
Here, for $j\in\{0,1,\dotsc\}$, we let 
\begin{equation*}
 I_j:=\Bigl\{j+1-\Bigl\lfloor\frac{j}{2}\Bigr\rfloor,\dotsc,j+1\Bigr\}\quad\text{such that}\quad |I_j|=1+\Bigl\lfloor\frac{j}{2}\Bigr\rfloor
\end{equation*}
and, for $l\in I_j$, we define the constants 
\begin{equation*}
 A_{j,l}^{(m)}:=\sum_{M\in\mathcal{S}_{j,l}^{(m)}}\prod_{i\in M\setminus\{m-j\}}\big(b_i 1_{\{i-1\in M\}}+a_i 1_{\{i-1\notin M\}}\big)D_i,
\end{equation*}
where $\mathcal{S}_{j,l}^{(m)}$ is the collection of those size $l$ subsets $M$ of $M_j^{(m)}:=\{m-j,m-j+1,\dotsc,m\}$ such that $\{m,m-j\}\subseteq M$ and $M\cap \{i,i+1\}\not=\emptyset$ for all $m-j\leq i\leq m-1$.
\end{lemma}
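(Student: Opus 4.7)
The plan is to proceed by strong induction on $m$, mirroring the arguments of Lemmas \ref{lemma2.1} and \ref{lemma2.2} but now accommodating a first-order rather than zeroth-order perturbation $T_j$. Under Assumption \ref{ass1} we have $f^{(k)}=f_{k,h_k}$, with $h_0=h$ and
\[
h_{k+1}=h_k'+T_kf_{k,h_k}=h_k'+\alpha_{k+1}f^{(k)}+\beta_{k+1}f^{(k+1)},
\]
so the iterated Stein equation provides the two basic estimates $\|f^{(k)}\|\le K_k\|h_k\|$ and $\|f^{(k+1)}\|\le D_k\|h_k\|$, together with the recursive triangle inequality $\|h_{k+1}\|\le\|h_k'\|+a_{k+1}\|f^{(k)}\|+b_{k+1}\|f^{(k+1)}\|$. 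The base cases $m=0,1$ are immediate from the hypotheses on $K_0$ and $D_0$ (and the formula should be read with the convention that sums over empty index sets vanish and that $A^{(m)}_{j,l}=0$ whenever $\mathcal{S}^{(m)}_{j,l}$ is empty).

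For the inductive step I would start from one of the two basic estimates (say $\|f^{(m)}\|\le D_{m-1}\|h_{m-1}\|$) and unfold $\|h_{m-1}\|$ by repeated application of the triangle inequality on the recursion above, descending one level at a time. Each occurrence of $\|f^{(i)}\|$ produced along the way offers two options: either re-inject it via the $D$-bound $\|f^{(i)}\|\le D_{i-1}\|h_{i-1}\|$, extending the expansion further, or, once the recursion has reached $i=0$, terminate the branch with the $K_0$ bound on $\|f\|$. The $\alpha_i$-term contributes the coefficient $a_i$ and the $\beta_i$-term contributes $b_i$, creating a binary tree of expansions. The subsets $M\in\mathcal{S}^{(m)}_{j,l}$ are engineered precisely to record one such path: the indices $i\in M$ mark the levels where a $D$-bound is applied, the indicator $\mathbf{1}_{\{i-1\in M\}}$ versus $\mathbf{1}_{\{i-1\notin M\}}$ distinguishes a $\beta_i$ from an $\alpha_i$ contribution at that level, and the set $I_j$ collects the admissible cardinalities.

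The main obstacle is combinatorial rather than analytic. Beyond the routine triangle-inequality bookkeeping, one must verify that adjoining the top index $m$ to every valid expansion used to bound $\|f^{(m-1)}\|$, together with the new $\alpha_m$ and $\beta_m$ contributions, reproduces exactly the family $\mathcal{S}^{(m)}_{j,l}$ with the correct products of $a_i,b_i,D_i$, and nothing extraneous. In practice this reduces to a bijective accounting between $\mathcal{S}^{(m-1)}_{\cdot,\cdot}$ and $\mathcal{S}^{(m)}_{\cdot,\cdot}$: each $M'\in\mathcal{S}^{(m-1)}_{\cdot,\cdot}$ gives rise to two offspring in the next stage depending on whether $m$ is or is not added to $M'$, matching the two choices made by the recursion. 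The boundary terms $D_0\sum_{l\in I_m}A^{(m)}_{m+1,l}$ and $a_1K_0D_1\sum_{l\in I_{m-1}}A^{(m)}_{m-1,l}$ must be tracked separately because they arise from paths that terminate by invoking the $K_0$ bound on $\|f\|$ (coming from the $\alpha_1 f$ component of $T_0$) rather than a $D$-bound on some $\|f_{k,h_k}'\|$; once one checks that these two special contributions absorb precisely the expansion tails which descend all the way to level zero, the induction closes.
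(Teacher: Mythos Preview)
Your overall plan---iterate the one-step estimate and track the resulting combinatorics via the subsets $M$---is the paper's strategy, but the recursion you set up does not close. You write $h_{k+1}=h_k'+\alpha_{k+1}f^{(k)}+\beta_{k+1}f^{(k+1)}$ and then propose to control $\|h_{k+1}\|$ via $\|h_k'\|+a_{k+1}\|f^{(k)}\|+b_{k+1}\|f^{(k+1)}\|$. The problem is $\|h_k'\|$: since $h_k$ already contains $\alpha_kf^{(k-1)}+\beta_kf^{(k)}$, its derivative throws in two further $f$-derivatives at every step, and the expansion never reduces to a finite sum over $\|h^{(j)}\|$'s. What the paper actually uses (the ``by induction'' line at the start of the proof) is the identity
\[
L_{j+1}f^{(j+1)}=h^{(j+1)}+\alpha_{j+1}f^{(j)}+\beta_{j+1}f^{(j+1)},
\]
with $h^{(j+1)}$ the $(j{+}1)$-th derivative of the \emph{original} $h$. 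This yields the clean two-step descent
\[
\|f^{(k+1)}\|\le D_k\|h^{(k)}\|+b_kD_k\|f^{(k)}\|+a_kD_k\|f^{(k-1)}\|,
\]
in which the only non-$f$ term is bounded by hypothesis. With this corrected inequality your path picture is right: elements of $M$ record the levels hit when descending from $m$ to $m-j$ in steps of size one ($b$-factor, $i-1\in M$) or two ($a$-factor, $i-1\notin M$).

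Organizationally the paper does not run a strong induction on $m$, nor a full tree expansion. It fixes $m$ and maintains a sliding window
\[
\|f^{(m+1)}\|\le\sum_{j=0}^k C_1^{(m-j)}\|h^{(m-j)}\|+C_2^{(m-k)}\|f^{(m-k)}\|+C_3^{(m-k)}\|f^{(m-k-1)}\|,
\]
with the three coefficient sequences satisfying $C_1^{(k-1)}=D_{k-1}C_2^{(k)}$, $C_2^{(k-1)}=C_3^{(k)}+b_{k-1}D_{k-1}C_2^{(k)}$, $C_3^{(k-1)}=a_{k-1}D_{k-1}C_2^{(k)}$. The closed forms $C_2^{(m-j)}=\sum_{l\in I_{j+1}}A_{j+1,l}$, etc., are then verified by a separate induction on $j$, which amounts precisely to the bijection you sketch between $\mathcal{S}_{j,l}\cup\mathcal{S}_{j+1,l}$ and $\mathcal{S}_{j+2,\cdot}$. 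Because only two consecutive $f$-derivatives survive at each stage, this linear recursion is a good deal easier to push through than tracking every leaf of the tree; the terminal step $k=m-1$ is where $\|f'\|\le D_0\|h\|$ and $\|f\|\le K_0\|h\|$ are finally invoked, producing the two boundary terms you identified.
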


\begin{proof}
By induction one can see that 
\begin{equation*}
 L_{j+1}f^{(j+1)}=h^{(j+1)}+\Bigl(\sum_{i=1}^{j+1}\alpha_{i}\Bigr)f^{(j)}+\Bigl(\sum_{i=1}^{j+1}\beta_{i}\Bigr)f^{(j+1)}
\end{equation*}
holds for all $0\leq j<N$.
We will occasionally suppress the index $m$ from the notation. Note that for each fixed $1\leq k\leq m$ we have 
\begin{equation*}
 \|f^{(k+1)}\|\leq D_k\|h^{(k)}\|+b_kD_k\|f^{(k)}\|+a_kD_k\|f^{(k-1)}\|.
\end{equation*}
From this, for each $0\leq k\leq m-1$, we can obtain a formula of the form 
\begin{equation}\label{backward}
 \|f^{(m+1)}\|\leq\sum_{j=0}^k C_1^{(m-j)}\|h^{(m-j)}\|+C_2^{(m-k)}\|f^{(m-k)}\|+C_3^{(m-k)}\|f^{(m-k-1)}\|
\end{equation}
with sequences $C_1^{(j)}, C_2^{(j)}, C_3^{(j)}$, $j=1,\dotsc,m$, which satisfy the following recursive relations:
\begin{align}
 C_1^{(k-1)}&=D_{k-1}C_2^{(k)}\label{c1}\\
 C_2^{(k-1)}&=C_3^{(k)}+b_{k-1}D_{k-1}C_2^{(k)}\quad\text{and}\label{c2}\\
 C_3^{(k-1)}&=a_{k-1}D_{k-1}C_2^{(k)}.\label{c3}
\end{align}
The initial values are given by 
\begin{equation}\label{ini}
 C_1^{(m)}=D_m\,,\quad C_2^{(m)}=b_mD_m\quad\text{and}\quad C_3^{(m)}=a_mD_m.
\end{equation}
Choosing $k=m-1$ in \eqref{backward}, we obtain
\begin{align}\label{fmplus1}
 \|f^{(m+1)}\|&\leq\sum_{j=0}^{m-1} C_1^{(m-j)}\|h^{(m-j)}\|+ C_2^{(1)}\|f'\|+C_3^{(1)}\|f\|\notag\\
 &\leq \sum_{j=0}^{m-1} C_1^{(m-j)}\|h^{(m-j)}\|+D_0C_2^{(1)}\|h\|+K_0C_3^{(1)}\|h\|\notag\\
 &=\sum_{j=0}^{m-1} C_1^{(m-j)}\|h^{(m-j)}\|+C_1^{(0)}\|h\|,
\end{align}
where
\[C_1^{(0)}:=D_0C_2^{(1)}+K_0C_3^{(1)}.\]
Thus, it suffices to find explicit expressions for the sequences $C_1^{(m-j)}, C_2^{(m-j)}, C_3^{(m-j)}$, $j=0,1,\dotsc,m-1$. We claim that these are given by 
\begin{align*}
 C_1^{(m-j)}&=\sum_{l\in I_j}D_{m-j}A_ {j,l}=\sum_{l\in I_j}\sum_{M\in\mathcal{S}_{j,l}}D_{m-j}\prod_{i\in M\setminus\{m-j\}}\big(b_i 1_{\{i-1\in M\}}+a_i 1_{\{i-1\notin M\}}\big)D_i,\\
C_2^{(m-j)}&=\sum_{l\in I_{j+1}} A_{j+1,l}=\sum_{l\in I_{j+1}}\sum_{M\in\mathcal{S}_{j+1,l}}\prod_{i\in M\setminus\{m-j-1\}}\big(b_i 1_{\{i-1\in M\}}+a_i 1_{\{i-1\notin M\}}\big)D_i,\\
 C_3^{(m-j)}&=\sum_{l\in I_j}a_{m-j}D_{m-j}A_ {j,l}=\sum_{l\in I_j}\sum_{M\in\mathcal{S}_{j,l}}a_{m-j}D_{m-j}\prod_{i\in M\setminus\{m-j\}}\big(b_i 1_{\{i-1\in M\}}+a_i 1_{\{i-1\notin M\}}\big)D_i.
 \end{align*}
This will be shown by induction on $j=0,1,\dotsc,m-1$. If $j=0$, then the claim is true by virtue of \eqref{ini}. Suppose that it holds for some $0\leq j\leq m-2$. Then, from \eqref{c1} and the induction hypothesis 
for $C_2^{(m-j)}$ we obtain 
\begin{align*}
 C_1^{(m-j-1)}&=D_{m-j-1}C_2^{(m-j)}=\sum_{l\in I_{j+1}} D_{m-j-1}A_{j+1,l},
\end{align*}
as claimed. Similaraly, we obtain the claim for $C_3^{(m-j-1)}$. As to $C_2^{(m-j-1)}$, from \eqref{c2} and the induction hypothesis for $C_2^{(m-j)}$ and for $C_3^{(m-j)}$ we obtain that 
\begin{align}\label{indc21}
 C_2^{(m-j-1)}&=C_3^{(m-j)}+b_{m-j-1}D_{m-j-1}C_2^{(m-j)}\notag\\
 &=\sum_{l\in I_j}a_{m-j}D_{m-j}A_ {j,l}+\sum_{l\in I_{j+1}}b_{m-j-1}D_{m-j-1} A_{j+1,l}.
\end{align}
Suppose that $l\in I_j$. Then, if $M\in\mathcal{S}_{j,l}$, we have $M':=M\cup\{m-j-2\}\in\mathcal{S}_{j+2,l+1}$ and $m-j-1\notin M'$. This implies that 
\begin{align*}\label{indc22}
 a_{m-j}D_{m-j}A_ {j,l}&=\sum_{M\in\mathcal{S}_{j,l}}a_{m-j}D_{m-j}\prod_{i\in M\setminus\{m-j\}}D_i\big(b_i 1_{\{i-1\in M\}}+a_i 1_{\{i-1\notin M\}}\big)\\
 &=\sum_{\substack{M'\in\mathcal{S}_{j+2,l+1}:\\ m-j-1\notin M'}}\prod_{i\in M'\setminus\{m-j-2\}}D_i\big(b_i 1_{\{i-1\in M\}}+a_i 1_{\{i-1\notin M\}}\big)\,,
\end{align*}
because, for $M'\in\mathcal{S}_{j+2,l+1}$, the condition that $m-j-1\notin M'$ necessitates that $m-j\in M'$. Furthermore, if $j$ is even, then it is easy to see that 
\begin{equation*}\label{indc23}
 a_{m-j}D_{m-j}A_ {j,j/2+1}=A_{j+2,j/2+2},
\end{equation*}
because $M'\in\mathcal{S}_{j+2,j/2+2}$ necessarily implies that $m-j-1\notin M'$.

Similarly, if $l\in I_{j+1}$ and $N\in\mathcal{S}_{j+1,l}$, then $N':=N\cup\{m-j-2\}\in\mathcal{S}_{j+2,l+1}$ and $m-j-1\in N\subseteq N'$ and, thus, 
\begin{align*}\label{indc24}
 b_{m-j-1}D_{m-j-1} A_{j+1,l}&=\sum_{N\in\mathcal{S}_{j+1,l}}b_{m-j-1}D_{m-j-1}\prod_{i\in N\setminus\{m-j-1\}}D_i\big(b_i 1_{\{i-1\in N\}}+a_i 1_{\{i-1\notin N\}}\big)\notag\\
 &=\sum_{\substack{N'\in\mathcal{S}_{j+2,l+1}:\\m-j-1\in N'}}\prod_{i\in N'\setminus\{m-j-2\}}D_i\big(b_i 1_{\{i-1\in N'\}}+a_i 1_{\{i-1\notin N'\}}\big).
\end{align*}
Also, if $j$ happens to be odd, then is is easily checked that
\begin{equation}\label{indc25}
 b_{m-j-1}D_{m-j-1}A_{j+1,j+2}=A_{j+2,j+3},
\end{equation}
because for each $j$ the only size $j+1$ subset of $M_j^{(m)}$ is $M_j^{(m)}$ itself. Thus, from \eqref{indc21}-\eqref{indc25} we conclude that, indeed,
\begin{equation*}
 C_2^{(m-j-1)}=\sum_{l\in I_{j+2}}A_{j+2,l},
\end{equation*}
as claimed. This finishes the induction step and \eqref{fmplus1} now yields 
\begin{align*}
 \|f^{(m+1)}\|&\leq\sum_{j=0}^{m-1}C_1^{(m-j)}\|h^{(m-j)}\|+\bigl(D_0C_2^{(1)}+K_0C_3^{(1)}\bigr)\|h\|\\
 &=\sum_{j=0}^{m-1} D_{m-j}\bigg(\sum_{l\in I_j} A_{j,l}^{(m)}\bigg)\|h^{(m-j)}\|+\bigg(D_0\sum_{l\in I_{m}}A_{m,l}^{(m)}+a_1K_0D_1\sum_{l\in I_{m-1}}A_{m-1,l}^{(m)}\bigg)\|h\|,
\end{align*}
completing the proof.

\end{proof}

\section{Bounds for derivatives of solutions of Stein equations via the iterative technique}

In this section, we apply the iterative technique, that was introduced in Section 2, to obtain bounds for derivatives of any order for the solutions of a number of Stein equations from the literature.  The iterative method presented in this paper is particularly suitable for linear Stein equations, whose coefficients are polynomials of order at most one.  In particular, we can directly apply Lemmas \ref{lemma2.1}--\ref{lemma2.3} to obtain estimates for the solutions of the normal, gamma, beta, Student's $t$, PRR and variance-gamma distributions.  

However, the iterative approach also applies equally well to many linear Stein equations not covered by these lemmas.  We illustrate this with an application to a distribution with density proportional  to $e^{-x^4/12}$, which appears as the distributional limit of the magnetization in a Curie-Weiss model.  The iterative method can also be used for Stein equations that are linear PDEs, and we demonstrate this with an application of the iterative technique to the multivariate normal Stein equation. 

\subsection{Estimates for solutions of Stein equations}

Here, we consider the application of the iterative method to a number of Stein equations from the literature.  For each of the distributions considered, we state the Stein equation and its solution, as well as the best existing bounds for the solution and its derivatives.  For each distribution, the iterative technique can be applied to bound derivatives of any order given bounds on just the solution or some of its lower order derivatives.  The iterative technique does not lead to improvements in estimates for derivatives of the solution of the normal and gamma Stein equations, although it does allow for simpler derivations at the expense of slightly larger constants (see Sections 3.2.1 and 3.2.2).  However, we do obtain new bounds for Student's $t$ and the inverse-gamma distributions, the distribution with density proportional to $e^{-x^4/12}$), as well as new bounds for higher order derivatives of the PRR and variance-gamma distributions.  The results are stated without further comment; proofs and further discussions are given in Section 3.2.  It should be understood that the bounds stated below are new, unless a citation is given.

\vspace{3mm}

\noindent{\emph{Notation.}} The double factorial of a positive integer $n$ is given by
\begin{equation*}n!!=\begin{cases} 1\cdot 3\cdot 5\cdot \cdots (n-2)\cdot n, & \text{$n>0$ odd} \\
2\cdot 4\cdot 6 \cdots (n-2)\cdot n, & \text{$n>0$ even}
\end{cases}
\end{equation*}
and we define $(-1)!!=0!!=1$ (\cite{arfken}, p$.$ 547).  The Pochhammer symbol is defined by $(x)_n=x(x+1)\cdots(x+n-1)$ and the Pochhammer $k$-symbol is defined as $(x)_{n,k}=x(x+k)(x+2k)\cdots(x+(n-1)k)$.  As in Section 2, we set the empty product $\prod_{i=0}^{-1}a_i$ to be equal to $1$.  We say that the function $h:S\subseteq\mathbb{R}\rightarrow\mathbb{R}$ belongs to the class $C_b^n(S)$ if $h^{(n-1)}$ exists and is absolutely continuous, with derivatives up to $n$-th order bounded.  It shall also be convenient to let $\tilde{h}(x)=h(x)-\mathbb{E}h(Z)$ and $h^{(0)}\equiv h$.  For a function $f:S\subseteq\mathbb{R}\rightarrow\mathbb{R}$, we shall write $\|f\|=\|f\|_\infty=\sup_{x\in S}|f(x)|$.  Finally, we shall let $D$ denote the usual differential operator and $I$ denote the identity operator

\subsubsection{Normal distribution}

\begin{enumerate}
\item Let $Z\sim N(0,1)$ with density $p(x)=\frac{1}{\sqrt{2\pi}}e^{-x^2/2}$, $x\in\mathbb{R}$.

\item The Stein equation is (see \cite{stein}, \cite{stein2})
\begin{equation}\label{311}Lf(x)=f'(x)-xf(x)=\tilde{h}(x)
\end{equation}
where $\tilde{h}(x)=h(x)-\mathbb{E}h(Z)$, and, for $k\geq1$,
\begin{equation}\label{311n}Lf^{(k)}(x)=h^{(k)}(x)+kf^{(k-1)}(x).
\end{equation}

We can verify (\ref{311n}) by induction on $k$.  Differentiating both sides of (\ref{311n}) gives
\begin{equation}\label{indproof}\frac{\mathrm{d}}{\mathrm{d}x}Lf^{(k)}(x)=f^{(k+2)}(x)-xf^{(k+1)}(x)-f^{(k)}(x)=h^{(k+1)}(x)+kf^{(k)}(x),
\end{equation}
which on rearranging yields
\begin{equation*}Lf^{(k+1)}(x)=h^{(k+1)}(x)+(k+1)f^{(k)}(x),
\end{equation*}
as required.  From (\ref{indproof}) we can read off that $(L_k,T_k)=(L,I)$ for $k\geq0$.  We can therefore bound the derivatives of the solution via Lemma \ref{lemma2.1}, with $a_k=k+1$ for $k\geq0$.

\item The solution to (\ref{311}) is
\begin{equation}\label{311soln}  f(x)=e^{x^2/2}\int_{-\infty}^x\tilde{h}(t)e^{-t^2/2}\,\mathrm{d}t.
\end{equation}

\item The following bounds for the solution were obtained by \cite{stein2}:
\begin{equation}\label{snbound}  \|f\| \leq \sqrt{\frac{\pi}{2}}\|\tilde{h}\|, \quad \|f'\| \leq 2\|\tilde{h}\|, \quad \|f''\| \leq 2\|h'\|.
\end{equation}
These bounds have since been extended to higher order derivatives.  For $h\in C_b^n(\mathbb{R})$, $n\geq0$, the solution satisfies (see \cite{goldstein1},  \cite{gaunt rate}, \cite{daly} respectively)
\begin{eqnarray}\label{snbound1}\|f^{(n)}\|&\leq&\frac{\|h^{(n+1)}\|}{n+1},\\ 
\label{snbound2}\|f^{(n)}\|&\leq&\frac{\Gamma(\frac{n+1}{2})\|h^{(n+1)}\|}{\sqrt{2}\Gamma(\frac{n}{2}+1)}, \\
\label{snbound3}\|f^{(n+2)}\|&\leq& 2\|h^{(n+1)}\|.
\end{eqnarray}

\end{enumerate}

\subsubsection{Gamma distribution}

\begin{enumerate}
\item Let $Z\sim \Gamma(r,\lambda)$, $r,\lambda>0$, with density $p(x)=\frac{\lambda^r}{\Gamma(r)}x^{r-1}e^{-\lambda x}$, $x>0$.

\item The Stein equation is (see \cite{diaconis})
\begin{equation}\label{322}L_{r,\lambda}f(x)=xf'(x)+(r-\lambda x)f(x)=\tilde{h}(x)
\end{equation}
and, for $k\geq1$,
\begin{equation}\label{gamid}L_{r+k,\lambda}f^{(k)}(x)=h^{(k)}(x)+k\lambda f^{(k-1)}(x).
\end{equation}
As was the case for the normal distribution, we can verify (\ref{gamid}) by induction.  This is the case for all the subsequent distributions, and for the sake of brevity we omit the details for all subsequent distributions.  Differentiating both sides of (\ref{322}) gives
\begin{align}\frac{\mathrm{d}}{\mathrm{d}x}L_{r+k,\lambda}f^{(k)}(x)&=xf^{(k+2)}(x)+(r+k-\lambda x)f^{(k+1)}(x)+f^{(k+1)}(x)-\lambda f^{(k)}(x)\nonumber\\
\label{readoff}&=h^{(k+1)}(x)+k\lambda f^{(k)}(x),
\end{align}
which on rearranging gives
\begin{align*}L_{r+k+1,\lambda}f^{(k+1)}(x)=h^{(k+1)}(x)+(k+1)\lambda f^{(k)}(x),
\end{align*}
as required.  From (\ref{readoff}), we can read off that $(L_k,T_k)=(L_{r+k,\lambda},\lambda I)$ for $k\geq0$.   We can therefore bound the derivatives of the solution via Lemma \ref{lemma2.1}, with $a_k=\lambda(k+1)$ for $k\geq0$.

\item The solution to (\ref{322}) is
\begin{equation}\label{322soln}f(x)=\frac{e^{\lambda x}}{x^r}\int_0^{x}t^{r-1}e^{-\lambda t}\tilde{h}(t)\,\mathrm{d}t.
\end{equation}

\item For $h\in C_b^n(\mathbb{R}_+)$, $n\geq0$, the solution satisfies (see \cite{luk},  \cite{gaunt thesis}, \cite{gaunt chi square} respectively)
\begin{eqnarray}\label{chisquarebound1}\|f^{(n-1)}\|&\leq&\frac{1}{n\lambda}\|h^{(n)}\|,  \\
\label{chisquarebound2} \|f^{(n)}\|&\leq&\bigg\{\frac{\sqrt{2\pi}+e^{-1}}{\sqrt{r+n}}+\frac{2}{r+n}\bigg\}\|h^{(n)}\|, \\
\label{chisquarebound3} \|f^{(n)}\|&\leq&\frac{2}{r+n}\big\{3\|h^{(n)}\|+2\lambda\|h^{(n-1)}\|\big\}. 
\end{eqnarray}
A straightforward application of Lemma 1 of \cite{schoutens0} yields
\begin{equation}\label{schgam}\|f\|\leq \frac{e^r\Gamma(r)}{r^r}\|\tilde{h}\|,
\end{equation}
which has a smaller constant than (\ref{chisquarebound2}) when $n=0$.

\item Consider the $\mathrm{Exp}(\lambda)\stackrel{\mathcal{D}}{=}\Gamma(1,\lambda)$ distribution.  For the case $r=1$, some of the estimates for the solution (\ref{322}) can be improved.  For a continuously differentiable test function $h$ with a Lipschitz derivative $h'$, \cite{dobler beta} obtained the bounds 
\begin{equation*}
 \|f\|\leq\frac{1}{\lambda}\|h'\|,\quad\|f'\|\leq\|h'\|,\quad\|f''\|\leq\frac{2\lambda}{3}\|h'\|+\frac{2}{3}\|h''\|,
\end{equation*}
which have better constants than (\ref{chisquarebound2}) and (\ref{chisquarebound3}).

\end{enumerate}

\subsubsection{Beta distribution}

\begin{enumerate}
\item Let $Z\sim \mathcal{B}(\alpha,\beta)$, $\alpha,\beta>0$, with density $p(x)=\frac{1}{B(\alpha,\beta)}x^{\alpha-1}(1-x)^{\beta-1}$, $0<x<1$, where $B(\alpha,\beta)=\Gamma(\alpha)\Gamma(\beta)/\Gamma(\alpha+\beta)$ is the beta function.

\item The Stein equation is (see \cite{schoutens})
\begin{equation}\label{344}L_{\alpha,\beta}f(x)=x(1-x)f'(x)+(\alpha-(\alpha+\beta)x)f(x)=\tilde{h}(x).
\end{equation}
and, for $k\geq1$,
\begin{equation*}L_{\alpha+k,\beta+k}f^{(k)}(x)=h^{(k)}(x)+k(\alpha+\beta+k-1)f^{(k-1)}(x).
\end{equation*}
We have that $(L_k,T_k)=(L_{\alpha+k,\beta+k},(\alpha+\beta+2k)I)$ for $k\geq0$.  We can therefore bound the derivatives of the solution via Lemma \ref{lemma2.1}, with $a_k=\sum_{i=0}^k(\alpha+\beta+2i)=k(\alpha+\beta+k-1)$.

\item The solution to (\ref{344}) is
\begin{equation*}f(x)=\frac{1}{x^{\alpha}(1-x)^\beta}\int_0^x t^{\alpha-1}(1-t)^{\beta-1}\tilde{h}(t)\,\mathrm{d}t.
\end{equation*}

\item It was shown by \cite{dobler beta} that, if $h$ is bounded,
\begin{equation*}\|f\|\leq\frac{\|\tilde{h}\|}{2m(1-m)p(m)},
\end{equation*}
where $m$ is the median of the $\mathcal{B}(\alpha,\beta)$ distribution.  Also, for Lipschitz $h$ with Lipschitz constant $\|h'\|$,
\begin{equation*}\|f\|\leq\frac{\|h'\|}{\alpha+\beta} \quad \text{and} \quad \|f'\|\leq C(\alpha,\beta)\|h'\|,
\end{equation*}
where
\begin{align*}C(\alpha,\beta)&=2(\alpha+\beta)\begin{cases}B(\alpha,\beta), &\quad \alpha\leq1,\,\beta\leq1 \\
\alpha^{-1}, &\quad \alpha\leq1,\,\beta>1 \\
\beta^{-1}, &\quad \alpha>1,\,\beta\leq1 \\
\alpha^{-1}\beta^{-1}B(\alpha,\beta)^{-1}, &\quad \alpha>1,\,\beta>1
\end{cases} \quad \text{if $\alpha\not=\beta$ and} \\
C(\alpha,\alpha)&=\begin{cases}4, &\quad 0<\alpha<1 \\
\frac{2\alpha\sqrt{\pi}\Gamma(\alpha)}{\Gamma(\alpha+1/2)}, &\quad \alpha\geq1.
\end{cases}
\end{align*}
A bound of the form $\|f'\|\leq K(\alpha,\beta)\|h'\|$ was also obtained by \cite{goldstein4}, in which the constant $K(\alpha,\beta)$, for various parameter values, is sometimes better and sometimes worse than $C(\alpha,\beta)$.  An iterative approach with base case $\|f'\|\leq C(\alpha,\beta)\|h'\|$ was used by \cite{dobler beta} to obtain the bound, for $h\in C_b^{n}((0,1))$, $n\geq1$, 
\begin{align*}\|f^{(n)}\|&\leq \sum_{j=1}^nj!(\alpha+\beta+j-1)_{n-j}\bigg(\prod_{l=1}^nC(\alpha+l-1,\beta+l-1)\bigg)\|h^{(j)}\| .
\end{align*}

\item For the special case of the $\mathcal{B}(1/2,1/2)$ distribution, also known as the arcsine distribution, \cite{dobler1} showed that
\begin{equation*}\|f'\|\leq\min\bigg(\|h'\|\,,\, \frac{\|\tilde{h}\|}{\pi}\bigg),\quad\|f'\|\leq 4\|h'\|, \quad  \|f''\|\leq 6\pi\|h'\|+\frac{3}{2}\pi\|h''\|.
\end{equation*} 

\end{enumerate}

\subsubsection{Student's $t$ distribution}

\begin{enumerate}
\item For $\delta>0$ and $d>0$, the non-standardised Student's $t$-distribution has p.d.f.
\begin{equation} \label{fancy} p(x)=\frac{\Gamma(\frac{d+1}{2})}{\sqrt{\pi \delta^2}\Gamma(\frac{d}{2})}\bigg(1+\frac{x^2}{\delta^2}\bigg)^{-\frac{1}{2}(d+1)}, \quad x\in\mathbb{R}.
\end{equation}
In the case $d = \nu$, $\delta =\sqrt{\nu}$ the density (\ref{fancy}) is that of Student's $t$-distribution with $\nu$ degrees of freedom.

\item The Stein equation is (see \cite{gaunt thesis}, Section 7)
\begin{equation}\label{355}L_{d,\delta}f(x)=(\delta^2+x^2)f'(x)-(d-1)xf(x)=\tilde{h}(x)
\end{equation}
and, for $k\geq1$,
\begin{equation}\label{tsteinop}L_{d-2k,\delta}f^{(k)}(x)=h^{(k)}(x)+k(d-k)f^{(k-1)}(x).
\end{equation}
We have that $(L_k,T_k)=(L_{d-2k,\delta},(d-2k-1)I)$ for $k\geq0$.  We can therefore bound the derivatives of the solution via Lemma \ref{lemma2.1}, with $a_k=\sum_{i=0}^k(d-2i-1)=(k+1)(d-k-1)$.

\item The solution to (\ref{355}) is
\begin{equation*}  f(x)=(\delta^2+x^2)^{(d-1)/2}\int_{-\infty}^x(\delta^2+t^2)^{-(d+1)/2}\tilde{h}(t)\,\mathrm{d}t.
\end{equation*}

\item The density $p$ satisfies $(s(x)p(x))'=\tau(x)p(x)$ with $s(x)=\delta^2+x^2$ and $\tau(x)=-(d-1)x$, and therefore from Lemmas 1 and 3 of \cite{schoutens0} we have the bounds
\begin{equation}\label{t111}\|f\|\leq\frac{\sqrt{\pi}\Gamma(\frac{d}{2})}{2\delta\Gamma(\frac{d+1}{2})}\|\tilde{h}\|
\end{equation}
and
\begin{equation}\label{t222}\|f'\|\leq 2\|(\delta^2+x^2)^{-1}\|\|\tilde{h}\|=\frac{2}{\delta^2}\|\tilde{h}\|.
\end{equation}

\item We can use Lemma \ref{lemma2.1} with (\ref{t111}) and (\ref{t222}) as base cases to bound higher order derivatives of the solution.  

Suppose that $h\in C_b^{n}(\mathbb{R})$ and $d-2n>0$.  Then
\begin{equation*}\|f^{(n)}\|\leq \sum_{j=0}^n\bigg(\frac{\sqrt{\pi}}{2\delta}\bigg)^{n+1-j}\frac{n!}{j!}(d-n)_{n-j}A_j\|\tilde{h}^{(j)}\|,
\end{equation*}
where $A_j=\prod_{i=j}^n\frac{\Gamma(\frac{d}{2}-i)}{\Gamma(\frac{d+1}{2}-i)}$.

Suppose now that $h\in C_b^{n-1}(\mathbb{R})$ and $d-2(n-1)>0$.  Then, for $n=2k+1$,
\begin{equation*}\|f^{(2k+1)}\|\leq \sum_{j=0}^k\bigg(\frac{2}{\delta^2}\bigg)^{k-j+1}\frac{(2k)!!}{(2j)!!}(d-2k)_{k-j,2}\|\tilde{h}^{(2j)}\|
\end{equation*}
and, for $n=2k$,
\begin{align*}\|f^{(2k)}\|&\leq \sum_{j=1}^k\bigg(\frac{2}{\delta^2}\bigg)^{k-j+1}\frac{(2k-1)!!}{(2j-1)!!}(d-2k+1)_{k-j,2}\|h^{(2j-1)}\|\\
&\quad+\frac{\sqrt{\pi}\Gamma(\frac{d}{2})}{2\delta\Gamma(\frac{d+1}{2})}\bigg(\frac{2}{\delta^2}\bigg)^{k}(2k-1)!!(d-2k+1)_{k,2}\|\tilde{h}\|.
\end{align*}

\end{enumerate}

\subsubsection{Inverse-gamma distribution}

\begin{enumerate}
\item Consider an inverse-gamma random variable $Z$ with parameters $\alpha,\beta>0$ and density $p(x)=\frac{\beta^\alpha}{\Gamma(\alpha)}x^{-\alpha-1}e^{-\beta/x}$, $x>0$.

\item The Stein equation is (see \cite{koudou})
\begin{equation}\label{3ii}L_{\alpha,\beta}f(x)=x^2f'(x)+(\beta-(\alpha-1)x)f(x)=\tilde{h}(x)
\end{equation}
and, for $k\geq1$,
\begin{equation*}L_{\alpha-2k,\beta}f^{(k)}(x)=h^{(k)}(x)+k(\alpha-k)f^{(k-1)}(x).
\end{equation*}
We have that $(L_k,T_k)=(L_{\alpha-2k,\beta},(\alpha-2k-1)I)$ for $k\geq0$.  We can therefore bound the derivatives of the solution via Lemma \ref{lemma2.1}, with $a_k=\sum_{i=0}^k(\alpha-2i-1)=(k+1)(\alpha-k-1)$.

\item The solution to (\ref{3ii}) is
\begin{equation*}  f(x)=x^{\alpha-1}e^{\beta/x}\int_{0}^x\tilde{h}(t)t^{-\alpha-1}e^{-\beta/t}\,\mathrm{d}t.
\end{equation*}

\item The density $p$ satisfies $(s(x)p(x))'=\tau(x)p(x)$ with $s(x)=x^2$ and $\tau(x)=\beta-(\alpha-1)x$. Therefore using Lemma 1 of \cite{schoutens0} together with the fact that $\mathbb{E}Z=\frac{\beta}{\alpha-1}$ for $\alpha>1$, we have the bound, for $\alpha>1$,
\begin{equation}\label{t111i}\|f\|\leq \frac{\Gamma(\alpha)}{\beta}\bigg(\frac{e}{\alpha-1}\bigg)^{\alpha-1}\|\tilde{h}\|.
\end{equation}

\item We can use Lemma \ref{lemma2.1} with base case (\ref{t111i}) to bound higher order derivatives of the solution.  

Suppose that $h\in C_b^{n}(\mathbb{R_+})$ and $\alpha>2n+1$.  Then
\begin{equation*}\|f^{(n)}\|\leq \sum_{j=0}^n\frac{n!}{j!}(\alpha-n)_{n-j}B_j\|\tilde{h}^{(j)}\|,
\end{equation*}
where $B_j=\prod_{i=j}^n\frac{\Gamma(\alpha-2i)}{\beta}\left(\frac{e}{\alpha-2i-1}\right)^{\alpha-2i-1}$.

\end{enumerate}

\subsubsection{PRR (P\"ekoz-R\"ollin-Ross) distribution}

\begin{enumerate}
\item Considered the PRR distribution (introduced by \cite{pekoz}) with density
\begin{equation*}\label{kummeru}\kappa_s(x)=\Gamma(s)\sqrt{\frac{2}{s\pi}}\exp\bigg(-\frac{x^2}{2s}\bigg)U\bigg(s-1,\frac{1}{2},\frac{x^2}{2s}\bigg), \quad x>0,\: s\geq \frac{1}{2},
\end{equation*}
where $U(a,b,x)$ denotes the confluent hypergeometric function of the second kind (\cite{olver}, Chapter 13).

\item The Stein equation is (see \cite{pekoz}) 
\begin{equation}\label{366}L_sf(x)=sf''(x)-xf'(x)-2(s-1)f(x)=\tilde{h}(x)
\end{equation}
and, for $k\geq1$,
\begin{equation*}L_sf^{(k)}(x)=h^{(k)}(x)+kf^{(k)}(x).
\end{equation*}
We have that $(L_k,T_k)=(L_{s},D)$ for $k\geq0$.  We can therefore bound the derivatives of the solution via Lemma \ref{lemma2.2}, with $a_k=k+1$.

\item The solution to (\ref{366}) is
\begin{equation*}f(x)=\frac{1}{s}V_s(x)\int_0^x\frac{1}{V_s(y)\kappa_s(y)}\int_0^y\tilde{h}(t)\kappa_s(t)\,\mathrm{d}t\,\mathrm{d}y,
\end{equation*}
where $V_s(x)=U\left(s-1,\frac{1}{2},\frac{x^2}{2s}\right)$.

\item The following bounds were obtained by \cite{pekoz}.  For $s\geq1$,
\begin{equation*}\|f'\|\leq\sqrt{2\pi}\|h\|, \quad \|f''\|\leq2\bigg(\pi\sqrt{s}+\frac{1}{s}\bigg)\|h\|, \quad \|f^{(3)}\|\leq 8\bigg(s+\frac{1}{4}+\sqrt{\frac{\pi}{2}}\bigg)\|h'\|
\end{equation*}
and, for $s=1/2$,
\begin{equation*}\|f''\|\leq4\|h\|, \quad \|f^{(3)}\|\leq 4\|h'\|.
\end{equation*}

\item We can apply Lemma \ref{lemma2.2} with $\|f'\|\leq\sqrt{2\pi}\|h\|$ and $\|f''\|\leq4\|h\|$ as base cases to bound higher order derivatives of the solution.  

Suppose $h\in C_b^{n-1}(\mathbb{R}_+)$.  Then, for $s\geq 1$,
\begin{equation*}\|f^{(n)}\|\leq (n-1)!\sum_{j=0}^{n-1}\frac{(2\pi)^{(n-j)/2}}{j!}\|h^{(j)}\|.
\end{equation*}
Suppose now that $h\in C_b^{n-2}(\mathbb{R}_+)$.  Then, for $n=2k+1$,
\begin{equation*}\|f^{(2k+1)}\|\leq \sqrt{2\pi}C^{k}(2k-1)!!\|h\|+ \sum_{j=1}^{k-1}C^{k-j+1}\frac{(2k-1)!!}{(2j-1)!!}\|h^{(2j-1)}\|
\end{equation*}
and, for $n=2k$,
\begin{equation*}\|f^{(2k)}\|\leq  \sum_{j=0}^{k-1}C^{k-j+1}\frac{(2k)!!}{(2j)!!}\|h^{(2j)}\|,
\end{equation*}
where $C=4$ if $s=1/2$ and $C=2(\sqrt{\pi}s+s^{-1})$ if $s\geq 1$.

\end{enumerate}

\subsubsection{Variance-gamma distribution}

\begin{enumerate}
\item The $\mathrm{VG}(r,\theta,\sigma,\mu)$ distribution with parameters $r > 0$, $\theta \in \mathbb{R}$, $\sigma >0$, $\mu \in \mathbb{R}$ has p.d.f.
\begin{equation}\label{vgdefn}p(x) = \frac{1}{\sigma\sqrt{\pi} \Gamma(\frac{r}{2})} e^{\frac{\theta}{\sigma^2} (x-\mu)} \bigg(\frac{|x-\mu|}{2\sqrt{\theta^2 +  \sigma^2}}\bigg)^{\frac{r-1}{2}} K_{\frac{r-1}{2}}\bigg(\frac{\sqrt{\theta^2 + \sigma^2}}{\sigma^2} |x-\mu| \bigg), 
\end{equation}
where $x \in \mathbb{R}$ and the modified Bessel function of the second kind is given, for $x>0$, by $K_\nu(x)=\int_0^\infty e^{-x\cosh(t)}\cosh(\nu t)\,\mathrm{d}t$ (see \cite{olver}).  The support of the variance-gamma distributions is $\mathbb{R}$ when $\sigma>0$, but in the limit $\sigma\rightarrow 0$ the support is the region $(\mu,\infty)$ if $\theta>0$, and is $(-\infty,\mu)$ if $\theta<0$.  The class of variance-gamma distributions contains many classical distributions as special cases (see \cite{gaunt vg} for a list of these cases); in particular,
\begin{align*}N(\mu,\sigma^2)&\stackrel{\mathcal{D}}{=}\lim_{r\rightarrow\infty}\mathrm{VG}(r,0,\sigma/\sqrt{r},\mu), \\
\Gamma(r,\lambda)&\stackrel{\mathcal{D}}{=}\lim_{\sigma\downarrow0}\mathrm{VG}(2r,(2\lambda)^{-1},\sigma,0), \\
\mathrm{Laplace}(\mu,\sigma)&\stackrel{\mathcal{D}}{=}\mathrm{VG}(2,0,\sigma,\mu), \\
\mathrm{PN}(2,\sigma^2)&\stackrel{\mathcal{D}}{=}\mathrm{VG}(1,0,\sigma,0),
\end{align*}
where $\mathrm{Laplace}(\mu,\sigma)$ denotes a Laplace distribution with density $\frac{1}{2\sigma}\exp\big(-\frac{|x-\mu|}{\sigma}\big)$, $x\in\mathbb{R}$, and $\mathrm{PN}(2,\sigma_X^2\sigma_Y^2)$ is the distribution of the product of the independent random variables $X\sim N(0,\sigma_X^2)$ and $Y\sim N(0,\sigma_Y^2)$.

\item For simplicity, consider the case $\mu=0$.  The Stein equation is (see \cite{gaunt vg})
\begin{equation}\label{377}L_{r,\theta,\sigma}f(x)=\sigma^2xf''(x)+(\sigma^2r+2\theta x)f'(x)-(r\theta-x)f(x)=\tilde{h}(x).
\end{equation}
Note that the $\mathrm{VG}(r,\theta,\sigma,0)$ Stein equation reduces to the standard normal (\ref{311}) and gamma (\ref{322}) Stein equations in the appropriate limits.

For $k\geq1$, we have
\begin{equation*}L_{r+k,\theta,\sigma}f^{(k)}(x)=h^{(k)}(x)+kf^{(k-1)}(x)+k\theta f^{(k)}(x).
\end{equation*}
We have that $(L_k,T_k)=(L_{r+k,\theta,\sigma}f^{(k)},I+\theta D)$ for $k\geq0$.  When $\theta=0$ we can bound the derivatives of solution using Lemma \ref{lemma2.1}, with $a_k=k+1$.  Otherwise, we can obtain estimates via Lemma \ref{lemma2.3}, with $a_k=k+1$ and $b_k=\theta(k+1)$.

\item The solution to (\ref{377}) is
\begin{align}  f(x) &=-\frac{e^{-\theta x/\sigma^2}}{\sigma^2|x|^{\nu}}K_{\nu}\bigg(\frac{\sqrt{\theta^2+\sigma^2}}{\sigma^2}|x|\bigg) \int_0^x e^{\theta t/\sigma^2} |t|^{\nu} I_{\nu}\bigg(\frac{\sqrt{\theta^2+\sigma^2}}{\sigma^2}|t|\bigg) \tilde{h}(t) \,\mathrm{d}t \nonumber \\
\label{vgsolngeneral} &\quad-\frac{e^{-\theta x/\sigma^2}}{\sigma^2|x|^{\nu}}I_{\nu}\bigg(\frac{\sqrt{\theta^2+\sigma^2}}{\sigma^2}|x|\bigg) \int_x^{\infty} e^{\theta t/\sigma^2} |t|^{\nu} K_{\nu}\bigg(\frac{\sqrt{\theta^2+\sigma^2}}{\sigma^2}|t|\bigg)\tilde{h}(t)\,\mathrm{d}t,
\end{align}
where $\nu=\frac{r-1}{2}$ and $I_{\nu}(x)=\sum_{k=0}^{\infty}\frac{1}{k!\Gamma(\nu+k+1)}\left(\frac{x}{2}\right)^{\nu+2k}$ is a modified Bessel function of the first kind (see \cite{olver}).

\item For the case $\theta=0$, \cite{gaunt vg} obtained the estimates
\begin{eqnarray}\label{vgf0}\|f\|&\leq& \frac{1}{\sigma}\bigg(\frac{1}{r}+\frac{\pi\Gamma(\frac{r}{2})}{2\Gamma(\frac{r+1}{2})}\bigg)\|\tilde{h}\|,  \\
\label{vgf1}\|f'\| &\leq& \frac{2}{\sigma^2r}\|\tilde{h}\|.
\end{eqnarray}
as well as uniform bounds for $f''$, $f^{(3)}$ and $f^{(4)}$.

\item An application of Lemma \ref{lemma2.1} with (\ref{vgf0}) and (\ref{vgf1}) as base cases allows us to obtain bounds for higher order derivatives.  

Suppose $h\in C_b^{n-1}(\mathbb{R})$.  Then, for $n=2k+1$,
\begin{equation*}\|f^{(2k+1)}\|\leq \sum_{j=0}^k\frac{(2k)!!}{(2j)!!}\frac{1}{(r+2j+2)_{k-j+1,2}}\bigg(\frac{2}{\sigma^2}\bigg)^{k-j+1}\|\tilde{h}^{(2j)}\|
\end{equation*}
and, for $n=2k$,
\begin{align*}\|f^{(2k)}\|&\leq \sum_{j=1}^k\frac{(2k-1)!!}{(2j-1)!!}\frac{1}{(r+2j+1)_{k-j+1,2}}\bigg(\frac{2}{\sigma^2}\bigg)^{k-j+1}\|h^{(2j-1)}\|\\
&\quad+\frac{(2k-1)!!2^k}{(r+1)_{k,2}\sigma^{2k+1}}\bigg(\frac{1}{r}+\frac{\pi\Gamma(\frac{r}{2})}{2\Gamma(\frac{r+1}{2})}\bigg)\|\tilde{h}\|.
\end{align*}

\item Now, consider the case of general $\theta$.  In Section 3.2.4, we obtain the estimates
\begin{eqnarray}\label{vgsolnunibound}\|f\|&\leq&\frac{\|\tilde{h}\|}{\sqrt{\theta^2+\sigma^2}}\bigg(\frac{2}{r}+C(r,\theta,\sigma)\bigg), \\
\label{vgsolnunibound1}\|f'\|&\leq&\frac{\|\tilde{h}\|}{\sigma^2}\bigg(\frac{2}{r}+C(r,\theta,\sigma)\bigg),
\end{eqnarray}
where
\begin{equation*}C(r,\theta,\sigma)=\begin{cases} \displaystyle \frac{\sqrt{\pi}\Gamma\left(\frac{r}{2}\right)}{\Gamma\left(\frac{r+1}{2}\right)}\bigg(1+\frac{\theta^2}{\sigma^2}\bigg)^{r/2}, & \:  r\geq2, \\
\displaystyle 6\Gamma\left(\frac{r}{2}\right)\bigg(1-\frac{1}{(1+\sigma^2/\theta^2)^{1/2}}\bigg)^{-r/2}, & \:   0<r<2. \end{cases}
\end{equation*}

\item We can use Lemma \ref{lemma2.3} with (\ref{vgsolnunibound}) and (\ref{vgsolnunibound1}) as base cases to obtain bounds for higher order derivatives.

Let $B_j=\frac{2}{r+j}+C(r+j,\theta,\sigma)$.  Then, for $h\in C_b^{n-1}(\mathbb{R})$,
\begin{align*}
 \|f^{(n)}\|&\leq\frac{1}{\sigma^2}\sum_{j=0}^{n-1} \bigg(B_{n-j}\sum_{l\in I_j} A_{j,l}^{(n)}\bigg)\|h^{(n-j)}\|\\
 &\quad+\frac{B_0}{\sigma^2}\bigg(\sum_{l\in I_{n}}A_{n,l}^{(n)}+\frac{B_1}{\sqrt{\theta^2+\sigma^2}}\sum_{l\in I_{n-1}}A_{n-1,l}^{(n)}\bigg)\|\tilde{h}\|.
\end{align*}
where 
\begin{equation*}
 A_{j,l}^{(n)}:=\sum_{M\in\mathcal{S}_{j,l}^{(n)}}\prod_{i\in M\setminus\{n-j\}}\big(|\theta| 1_{\{i-1\in M\}}+ 1_{\{i-1\notin M\}}\big)\frac{iB_i}{\sigma^2},
\end{equation*}
and the sets $I_j$ and $\mathcal{S}_{j,l}^{(n)}$ are defined as in Lemma \ref{lemma2.3}.

In particular, applying the iterative technique with (\ref{vgsolnunibound}) and (\ref{vgsolnunibound1}) as base cases yields the following simple bounds:
\begin{eqnarray*}\|f''\|&\leq&\frac{B_{1}}{\sigma^2}\|h'\|+\frac{B_0B_{1}}{\sigma^2}\bigg[\frac{|\theta|}{\sigma^2}+\frac{1}{\sqrt{\theta^2+\sigma^2}}\bigg]\|\tilde{h}\|, \\
\|f^{(3)}\|&\leq&\frac{B_{2}}{\sigma^2}\|h''\|+\frac{2|\theta| B_{1}B_{2}}{\sigma^4}\|h'\|+\frac{2B_{0}B_{2}}{\sigma^4}\bigg[1+\frac{|\theta| B_{1}}{\sqrt{\theta^2+\sigma^2}}+\frac{\theta^2 B_{1}}{\sigma^2}\bigg]\|\tilde{h}\|, \\
\|f^{(4)}\|&\leq&\frac{B_{3}}{\sigma^2}\|h^{(3)}\|+\frac{3|\theta| B_{2}B_{r+3}}{\sigma^4}\|h''\|+\frac{3B_{1}B_{3}}{\sigma^4}\bigg[1+\frac{2\theta^2B_{2}}{\sigma^2}\bigg]\|h'\|\\
&&+\frac{3B_{3}}{\sigma^4}\bigg[\frac{|\theta| B_0}{\sigma^2}\bigg(B_{1}+2\theta B_{2}+\frac{2\theta^2B_{1}B_{2}}{\sigma^2}\bigg) +\frac{B_{1}}{\sqrt{\theta^2+\sigma^2}}\bigg(1+\frac{2\theta^2B_{2}}{\sigma^2}\bigg)\bigg]\|\tilde{h}\|.
\end{eqnarray*}

\end{enumerate}

\subsubsection{Density proportional to $e^{-x^4/12}$}

\begin{enumerate}
\item Consider a probability distribution with density proportional to $e^{-x^4/12}$.  We quote from \cite{cs11} that the exact form of the density $p$ of this distribution is 
\begin{equation*}
 p(x)=\frac{\sqrt{2}}{3^{1/4}\Gamma(1/4)}e^{-x^4/12}, \quad x\in\mathbb{R}.
\end{equation*} 

The limiting distribution of the properly scaled total magnetization in the Curie-Weiss model at the critical inverse temperature 
$\beta=1$ was shown to have this density in \cite{en78}. In \cite{el10} and \cite{cs11} a version of Stein's method of exchangeable pairs was used to prove that also a Berry-Ess\'{e}en rate of order 
$n^{-1/2}$ holds for this limit theorem. As in neither of the works \cite{el10} and \cite{cs11} a constant for this rate is aimed at, no numerical bounds on the solution to the Stein equation 
have been stated, yet. However, in the appendix section of \cite{el10}, actually some numerical bounds are proved. 

We will firstly use these and the general bounds from \cite{dobler beta} to provide concrete bounds on the 
solution to the Stein equation and, then, use these as starting points for our iterative method 
to derive bounds for derivatives of all orders. 

\item The Stein equation is (see \cite{el10} and \cite{cs11})
\begin{equation}\label{411}
Lf(x)= f'(x)-\frac{x^3}{3}f(x)=\tilde{h}(x).
\end{equation}
We have
\begin{eqnarray}
Lf'(x)&=&h'(x)+x^2f(x)=:h_1(x),\label{cwi1} \\
Lf''(x)&=&h''(x)+2xf(x)+2x^2f'(x)=:h_2(x)\label{cwi2},
\end{eqnarray}
and, for $k\geq3$,
\begin{align*}Lf^{(k)}(x)&=h^{(k)}(x)+\frac{1}{3}k(k-1)(k-2)f^{(k-3)}(x)+k(k-1)xf^{(k-2)}(x)+kx^2f^{(k-1)}(x)\\
&=:h_k(x).
\end{align*}
We are unable to apply either of Lemmas \ref{lemma2.1}--\ref{lemma2.3} to bound the derivatives of the solution.  However, we can still apply the iterative technique to bound the derivatives; the details are given in Section 3.2.5.

\item The solution to (\ref{411}) is
\begin{equation*}  f(x)=e^{x^4/12}\int_{-\infty}^x \tilde{h}(t)e^{-t^4/12}\,\mathrm{d}t.
\end{equation*}

\item If $h$ is bounded and measurable, then it holds that 
\begin{equation}\label{elbounds}
 |f(x)|\leq\min\left(\frac{3^{1/4}\Gamma(1/4)}{2\sqrt{2}},\,\frac{3}{|x|^3}\right)\|\tilde{h}\|,\quad \|f'\|\leq2\|\tilde{h}\|.
\end{equation}
These bounds are not stated explicitly, but are actually proved in the appendix section of \cite{el10}.
If $h$ is Lipschitz-continuous, then we can prove that 
\begin{equation}\label{dbounds1}
 \|f\|\leq \frac{\sqrt{3\pi}}{2}\|h'\|\,,\quad\|f'\|\leq 2^{1/2}3^{1/4}\Gamma(1/4)\|h'\|,\quad\|f''\|\leq 4\|h'\|.
\end{equation}
More precisely, we can even show that for each $x\in\mathbb{R}$ we have 
\begin{equation}\label{dbounds2}
 |f(x)|\leq\min\bigg(\frac{\sqrt{3\pi}}{2},\,\frac{3}{x^2}\bigg)\|h'\|,
 \quad |f'(x)|\leq \min\bigg(2^{1/2}3^{1/4}\Gamma(1/4),\,\frac{12}{|x|^3}\bigg)\|h'\|.
\end{equation}
We would like to stress that the numerical bounds for Lipschitz test functions $h$ in \eqref{dbounds1} are original and the first ones obtained for this distribution. 
The bound on $f''$ in \eqref{dbounds1} will be proved by standard, but pretty involved calculations. It should thus not be left unmentioned that, using the fairly easy second bound in \eqref{elbounds} 
and our iterative procedure, we can easily obtain the bound 
\begin{equation}\label{alcwb}
 \|f''\|\leq 8\|h'\|,
\end{equation}
which has a slightly worse constant. This is completely analogous to the case of the normal distribution (see Section 3.2.1).

For bounds on derivatives of general order let $h\in C_b^n(\mathbb{R})$.
We obtain the following generalization of the first bound in \eqref{elbounds}:
\begin{equation}\label{genelbound1}
 |f^{(n)}(x)|\leq\min\left(\frac{3^{1/4}\Gamma(1/4)}{2\sqrt{2}},\,\frac{3}{|x|^3}\right)\sum_{k=0}^n a_{n-k}^{(n)}\|\tilde{h}^{(n-k)}\|\,,\quad x\in\mathbb{R}.
\end{equation}
Here, the nonnegative real numbers $a_j^{(n)}$, $n\in\mathbb{N}_0$, $0\leq j\leq n$ are given as follows: For each $n$ we have that 
\begin{equation}\label{inian}
 a_n^{(n)}=1\,,\quad a_{n-1}^{(n)}=\frac{3n}{\bigl(6c_1\bigr)^{1/3}}\,,\quad a_{n-2}^{(n)}=\frac{12n(n-1)}{\bigl(6c_1\bigr)^{2/3}}
 \end{equation}
whenever these quantities are defined, and, for $3\leq k\leq n$,
\begin{equation}\label{recan}
 a_{n-k}^{(n)}=\frac{3n}{\bigl(6c_1\bigr)^{1/3}}a_{n-k}^{(n-1)}+\frac{3n(n-1)}{\bigl(6c_1\bigr)^{2/3}}a_{n-k}^{(n-2)}+\frac{n(n-1)(n-2)}{6c_1}a_{n-k}^{(n-3)},
\end{equation}
where we write
\begin{equation*}
 c_1=\frac{\sqrt{2}}{3^{1/4}\Gamma(1/4)}
\end{equation*}
for the normalizing constant. Since we can also show that 
\begin{equation}\label{hnbound}
 \|h_n\|\leq \sum_{k=0}^n a_{n-k}^{(n)}\|\tilde{h}^{(n-k)}\|,
 \end{equation}
from the second bound in \eqref{elbounds} we obtain the following generalised version of this bound:  
 \begin{equation}\label{genelbound2}
 \|f^{(n+1)}\|\leq 2\sum_{k=0}^n a_{n-k}^{(n)}\|\tilde{h}^{(n-k)}\|.
\end{equation}

\end{enumerate}

\subsubsection{Multivariate normal distribution}

\begin{enumerate}
\item Let $\mathrm{Z}\sim \mathrm{MVN}(0,\Sigma)$ with density $p(x)=\frac{1}{(2\pi)^{d/2}\sqrt{\mathrm{det}(\Sigma)}}\exp\left(-\frac{1}{2}x^T\Sigma^{-1}x\right)$, $x\in\mathbb{R}^d$.  The covariance matrix $\Sigma\in \mathbb{R}^{d\times d}$ is nonnegative-definite.

\item The Stein equation is (see \cite{goldstein1})
\begin{equation}\label{422}Lf(x)=\sum_{i=1}^d\sum_{j\not=i}^d\sigma_{ij}\frac{\partial^2 f}{\partial x_i\partial x_j}(x)-\sum_{i=1}^dx_i\frac{\partial f}{\partial x_i}(x)=\tilde{h}(x),
\end{equation}
where $\sigma_{ij}=(\Sigma)_{ij}$, and, for $k\geq1$,
\begin{equation*}L\bigg(\frac{\partial^k f(x)}{\prod_{j=1}^k\partial x_{i_j}}\bigg)=\frac{\partial^kh(x)}{\prod_{j=1}^k\partial x_{i_j}}+k\frac{\partial^kf(x)}{\prod_{j=1}^k\partial x_{i_j}}.
\end{equation*}
We are unable to apply either of Lemmas \ref{lemma2.1}--\ref{lemma2.3} to bound the derivatives of the solution.  However, we can still apply the iterative technique to bound the derivatives; the details are given in Section 3.2.6.

\item The solution to (\ref{422}) is
\begin{equation*} f(x)=-\int_0^\infty [\mathbb{E}h(e^{-s}x+\sqrt{1-e^{-2s}}\mathrm{Z})-\mathbb{E}h(\mathrm{Z})]\,\mathrm{d}s.
\end{equation*}

\item It was shown by \cite{goldstein1} that, if $\Sigma$ is nonnegative-definite and $h\in C_b^{n}(\mathbb{R}^d)$, then the solution satisfies
 \begin{equation}\label{cheque} \bigg\|\frac{\partial^n f}{\prod_{j=1}^n\partial x_{i_j}}\bigg\|\leq \frac{1}{n}\bigg\|\frac{\partial^n h}{\prod_{j=1}^n\partial x_{i_j}}\bigg\|, \quad n\geq 1.
\end{equation}

Suppose now that $\Sigma$ is positive-definite matrix and that $h$ is bounded.  Then, the following estimate was obtained by \cite{gaunt rate}: 
\begin{equation}\label{mvnbound}\bigg\|\frac{\partial f}{\partial x_i}\bigg\|\leq \sqrt{\frac{\pi}{2}}\Bigg[\sum_{j=1}^d\sigma_{ij}^2\Bigg]^{1/2}\|\tilde{h}\|,
\end{equation}
If $h\in C_b^{n-1}(\mathbb{R}^d)$, $n\geq 2$, then
\begin{equation}\label{charm}\bigg\|\frac{\partial^n f}{\prod_{j=1}^n\partial x_{i_j}}\bigg\|\leq\frac{\Gamma(\frac{n}{2})}{\sqrt{2}\Gamma(\frac{n+1}{2})}\min_{1\leq l\leq n}\Bigg\{\Bigg[\sum_{j=1}^d\sigma_{i_lj}^2\Bigg]^{1/2} \bigg\|\frac{\partial^{n-1} h}{\prod_{\stackrel{1\leq j\leq n}{j\not=l}}\partial x_{i_j}}\bigg\|\Bigg\}.
\end{equation}

Analogous bounds to (\ref{cheque}) and (\ref{charm}) were obtained for the $n$-th derivatives of the solution $f$, viewed as $n$-linear forms, by \cite{meckes} and \cite{gaunt rate} respectively.

\end{enumerate}

\subsection{Further comments and proofs}

In this section, we provide additional comments to complement the results of Section 3.1, and we also provide insights into the application of the iterative technique in practice.

\subsubsection{Normal distribution}

Using elementary calculations, \cite{stein2} obtained the bounds (\ref{snbound}) for the solution (\ref{311soln}) of the standard normal Stein equation. However, this approach quickly becomes tedious for bounding higher order derivatives.  The first progress with regard to bounding higher order derivatives came from the observation by \cite{barbour2} that the standard normal Stein operator is the generator of an Ornstein-Uhlenbeck process (if we take $f=g'$).  Applying standard theory of Markov processes leads to a solution of the form (\ref{gensoln}):
\begin{equation}\label{gsolnnor}g(x)=-\int_0^\infty [\mathbb{E}h(e^{-s}x+\sqrt{1-e^{-2s}}Z)-\mathbb{E}h(Z)]\,\mathrm{d}s.
\end{equation}
If $h\in C_b^k(\mathbb{R})$ then, by the dominated convergence theorem,
\begin{equation}\label{norderivg}f^{(k-1)}(x)=g^{(k)}(x)=-\int_0^\infty e^{-ks}\mathbb{E}[h^{(k)}(e^{-s}x+\sqrt{1-e^{-2s}}Z)]\,\mathrm{d}s,
\end{equation}
and the bound (\ref{snbound1}) now follows easily.  The bound (\ref{snbound2}) results from an integration by parts on (\ref{norderivg}).  However, one has to work significantly harder to obtain (\ref{snbound3}) and quite involved calculations were required to achieve the bound.

Let us now consider the application of our iterative method to the standard normal Stein equation.  Using part (i) of Lemma \ref{lemma2.1} with $\|f\|\leq\sqrt{\frac{\pi}{2}}\|\tilde{h}\|$ as the base case, we have that, for $C_b^{(n)}(\mathbb{R})$,
\begin{equation*}\|f^{(n)}\|\leq\sum_{j=0}^n\bigg(\frac{\pi}{2}\bigg)^{(n-j+1)/2}\frac{n!}{j!}\|\tilde{h}^{(j)}\|,
\end{equation*}
and an application of part (ii) of Lemma \ref{lemma2.1} with base case $\|f'\|\leq 2\|\tilde{h}\|$ leads to a bound on $f^{(n)}$ involving one fewer derivative of $h$.  These bounds are much worse than the best bounds from the literature, as given in Section 3.1.1.  However, the iterative technique does allow us to deduce bounds for derivatives of any order of the solution given just the easily derived bounds $\|f\|\leq\sqrt{\frac{\pi}{2}}\|\tilde{h}\|$ and $\|f'\|\leq 2\|\tilde{h}\|$.

An interesting application of the iterative method is that it allows us to easily deduce a bound of the form $\|f^{(n+1)}\|\leq C\|h^{(n)}\|$, where $C$ is a universal constant, given just the bounds $\|f'\|\leq2\|\tilde{h}\|$ and $\|f^{(n-1)}\|\leq\frac{1}{n}\|h^{(n)}\|$.  Recall that, for $k\geq1$,
\begin{equation*}Lf^{(k)}(x)=h^{(k)}(x)+kf^{(k-1)}(x).
\end{equation*}
Then, an application of the iterative procedure with these bounds yields
\begin{equation*}\|f^{(n+1)}\|\leq 2\|h^{(n)}(x)+nf^{(n-1)}(x)\|\leq 2(\|h^{(n)}\|+n\|f^{(n-1)}\|)\leq 4\|h^{(n)}\|.
\end{equation*}
The constant $4$ is worse than the constant $2$ in the bound (\ref{snbound3}) obtained by \cite{daly}, but our derivation is considerably simpler.  This example also illustrates how the iterative technique can be used to deduce bounds of the form $\|f^{(n+1)}\|\leq C\|h^{(n)}\|$ from bounds of the form $\|f^{(n)}\|\leq K\|h^{(n)}\|$.  This is of course useful in practice, because bounds involving fewer derivatives of the test function $h$ are often desirable.

\subsubsection{Gamma distribution}

As is the case with the normal distribution, bounding derivatives of the solution of the gamma Stein equation using the representation (\ref{322soln}) of the solution is tedious.  The bound (\ref{schgam}) for the solution can be obtained by this approach, but bounds for derivatives of the solution have not been established through this representation of the solution.  However, \cite{luk} used a generator approach, similar to that described for the normal distribution, to obtain a formula for the $n$-th derivative of the solution of the Stein equation, from which the bound (\ref{chisquarebound1}) follows.  Through a technical calculation, \cite{gaunt thesis} obtained (\ref{chisquarebound2}) (this improved an earlier bound of \cite{pickett}).  The bound (\ref{chisquarebound3}) was obtained by an iterative approach similar to that described in this paper.

As was the case for the normal distribution, a direct application of part (i) of Lemma \ref{lemma2.1} with (\ref{schgam}) as the base case would yield bounds on the higher order derivatives of the solution that are much worse than the best bounds from the literature.  However, the difficulty of bounding higher order derivatives is again greatly reduced, because one only requires the bound (\ref{schgam}) to achieve such bounds.

For the gamma distribution, we can again use the iterative technique to obtain bounds of the form $\|f^{(n)}\|\leq C\|h^{(n)}\|$ from bounds of the form $\|f^{(n)}\|\leq K\|h^{(n+1)}\|$.  Using an iterative approach with inequalities (\ref{schgam}) and (\ref{chisquarebound1}) gives
\begin{align}\|f^{(n)}\|&\leq\frac{e^{r+n}\Gamma(r+n)}{(r+n)^{r+n}}\|h^{(n)}(x)+n\lambda f^{(n-1)}(x)\|\nonumber\\
&\leq\frac{e^{r+n}\Gamma(r+n)}{(r+n)^{r+n}}(\|h^{(n)}\|+n\lambda \|f^{(n-1)}\|)\nonumber\\
\label{gammarr}&\leq \frac{2e^{r+n}\Gamma(r+n)}{(r+n)^{r+n}}\|h^{(n)}\|.
\end{align}

Thus, the iterative method has yielded a bound of similar form to (\ref{chisquarebound2}), but with a much simpler proof.  Using Stirling's approximation, we see that (\ref{gammarr}) is of order $(r+n)^{-1/2}$ as $r+n\rightarrow\infty$, which is the same order as (\ref{chisquarebound2}).  However, the constant is better in (\ref{chisquarebound2}) than (\ref{gammarr}) for all $n\geq1$, whilst the constant in (\ref{gammarr}) is better than bound (\ref{chisquarebound2}) in the case $n=0$. 

\subsubsection{Beta, Student's $t$, inverse-gamma and the PRR distribution}

Unlike the normal and gamma distributions, there are currently no simple formulas in the literature for general $n$-th order derivatives of the solutions of the beta, Student's $t$, inverse-gamma and PRR Stein equations.  However, for each of these distributions uniform bounds are available for the solution and some of its lower order derivatives (beta and PRR) or it is straightforward to derive such bounds (Student's $t$ and inverse-gamma).  Therefore, the iterative method of this paper is particularly useful for these distributions.  

From a straightforward application of Lemmas \ref{lemma2.1} and \ref{lemma2.2} we arrive at our bounds for $n$-th order derivatives of the solutions of the non-standardised Student's $t$, inverse-gamma and PRR Stein equations respectively.  We could also apply Lemma \ref{lemma2.1} to bound the higher order derivatives of the solution of the beta Stein equation; however, this was already done by \cite{dobler beta}.  The bound of \cite{dobler beta} is slightly different from the one that would arise from an application of Lemma \ref{lemma2.1}, as in that work the base case was $\|f'\|\leq C(\alpha,\beta)\|h'\|$, rather than a base case of the form $\|f\|\leq K(\alpha,\beta)\|\tilde{h}\|$.

The non-standardised Student's $t$ example illustrates a pertinent point regarding the application of the iterative method.  Suppose that we are interested in bounding the derivatives of solution of the classical Student's $t$ Stein equation ($d=\nu$, $\delta=\sqrt{\nu}$).  By (\ref{tsteinop}), we have that, for $k\geq1$, the solution satisfies
\begin{equation*}L_{\nu-2k,\sqrt{\nu}}f^{(k)}(x)=h^{(k)}(x)+k(\nu-k)f^{(k-1)}(x).
\end{equation*} 
It is therefore clear that in order to use the iterative approach to bound higher order derivatives of the solution of the Student's $t$ Stein equation, we must work with the Stein equation for the more general non-standardised Student's $t$ distribution.  It was also observed by \cite{dobler beta} that in order to apply the iterative method to the exponential Stein equation, one must must work with the more general gamma Stein equation.

\subsubsection{Variance-gamma distribution}

For the case $\theta=0$, we were able to bound $n$-th order derivatives of the solution of the variance-gamma Stein equation via Lemma \ref{lemma2.1} with inequalities (\ref{vgf0}) and (\ref{vgf1}) as base cases.  Previously, only uniform bounds existed for the solution and its first four derivatives (see \cite{gaunt vg}).  Our bounds for the $n$-th order derivatives decay at a rate of order $(r+n)^{-1}$ as $r+n\rightarrow\infty$; better than the order $r^{-1/2}$ rate obtained by \cite{gaunt vg}, and our bounds also have smaller constants.  Consequently, our bounds allows for an improvement of the $O(r^{1/2}(m^{-1}+n^{-1}))$ bound of Theorem 4.9 of \cite{gaunt vg} to a $O(m^{-1}+n^{-1})$ bound.

For the case of general $\theta$, only modest progress had been made on the problem of bounding the derivatives of the solution of the Stein equation.  It was shown by \cite{gaunt thesis} and \cite{gaunt vg} that the solution and its first derivative are bounded if the test function $h$ is bounded, although explicit constants were not given.  The complicated form of the solution (\ref{vgsolngeneral}) means that bounding higher order derivatives directly is a challenging problem; see \cite{gaunt thesis}, Section 3 and Appendix D.   

This application clearly demonstrates the power of our method: bounding just the solution and its first derivative, and then applying Lemma \ref{lemma2.3} yields bounds on derivatives of the solution of any order.  This makes the problem much more tractable: the problem is essentially reduced to bounding the solution and its first derivative.  We end this section by obtaining such bounds; the bounds of Section 3.1.6 then follow immediately from Lemma \ref{lemma2.3} with inequalities (\ref{vgsolnunibound}) and (\ref{vgsolnunibound1}) as base cases. 

By an application of Stirling's inequality, we see that our bounds for $n$-th order derivatives of the solution of $\mathrm{VG}(r,\theta,\sigma,0)$ Stein equation are of order $(r+n)^{-1/2}$ as $r+n\rightarrow\infty$.  This is slower than the $O(r+n)^{-1}$ rate we obtained for the $\theta=0$ case.  It is an open problem to improve the bounds for the $\theta\not=0$ case to this rate.  

Finally, we note an application of our estimates for the solution of the $\mathrm{VG}(r,\theta,\sigma,0)$ Stein equation to the work of \cite{eichelsbacher} on variance-gamma approximation on Wiener space.  In that work, it is shown that a sequence of distributions of random variables in the second Wiener chaos converge to a variance-gamma distribution if and only if their second to sixth order moments converge to those of a variance-gamma random variable.  Bounds on the rate of convergence were also derived using the variance-gamma Stein equation.  These bounds were given in terms of an absolute constant, because explicit bounds were not available for derivatives of the solution of the Stein equation.  With our explicit bounds on the derivatives, it is possible to give explicit constants in the bounds of \cite{eichelsbacher}.

\vspace{3mm}

\noindent\emph{Proof of inequalities (\ref{vgsolnunibound}) and (\ref{vgsolnunibound1}).} In order to simplify the calculations, we make the following change of parameters
\begin{equation} \label{parameter} \nu=\frac{r-1}{2}, \qquad \alpha =\frac{\sqrt{\theta^2 +  \sigma^2}}{\sigma^2}, \qquad \beta =\frac{\theta}{\sigma^2}.
\end{equation}
With these parameters, the solution (\ref{vgsolngeneral}) can be written as
\begin{align} \label{ink} f(x) &=-\frac{e^{-\beta x} K_{\nu}(\alpha|x|)}{\sigma^2|x|^{\nu}} \int_0^x e^{\beta y} |y|^{\nu} I_{\nu}(\alpha|y|) \tilde{h}(y) \,\mathrm{d}y \nonumber \\
&\quad-\frac{e^{-\beta x} I_{\nu}(\alpha|x|)}{\sigma^2|x|^{\nu}} \int_x^{\infty} e^{\beta y} |y|^{\nu} K_{\nu}(\alpha|y|)\tilde{h}(y)\,\mathrm{d}y \\
&=-\frac{e^{-\beta x} K_{\nu}(\alpha|x|)}{\sigma^2|x|^{\nu}} \int_0^x e^{\beta y} |y|^{\nu} I_{\nu}(\alpha|y|) \tilde{h}(y) \,\mathrm{d}y \nonumber \\
\label{pen}&\quad+\frac{e^{-\beta x} I_{\nu}(\alpha|x|)}{\sigma^2|x|^{\nu}} \int_{-\infty}^{x} e^{\beta y} |y|^{\nu} K_{\nu}(\alpha|y|)\tilde{h}(y)\,\mathrm{d}y.
\end{align}
The equality between (\ref{ink}) and (\ref{pen}) means that in order to obtain uniform bounds for all $x\in\mathbb{R}$ it suffices to bound the solution and its derivative in the region $x\geq0$ as long as we consider the case of both negative and positive $\beta$.  That is, the uniform bound we derive for $x\geq 0$ also holds for $x\leq0$, and thus all $x\in\mathbb{R}$.  Straightforward calculations show that, for $x\geq 0$,
\begin{align}|f(x)| &\leq\frac{\|\tilde{h}\|}{\sigma^2}\bigg|\frac{e^{-\beta x} K_{\nu}(\alpha x)}{x^{\nu}}\int_0^{x} e^{\beta y}y^{\nu} I_{\nu}(\alpha y)\,\mathrm{d}y \bigg| +\frac{\|\tilde{h}\|}{\sigma^2}\bigg|\frac{e^{-\beta x}I_{\nu}(\alpha x)}{x^{\nu}} \int_x^{\infty} e^{\beta y}y^{\nu} K_{\nu}(\alpha y)\,\mathrm{d}y\bigg|, \nonumber \\
|f'(x)| &\leq\frac{\|\tilde{h}\|}{\sigma^2}\bigg|\frac{\mathrm{d}}{\mathrm{d}x}\bigg(\frac{e^{-\beta x}K_{\nu}(\alpha x)}{x^{\nu}}\bigg)\int_0^x e^{\beta y}y^{\nu} I_{\nu}(\alpha y)\,\mathrm{d}y \bigg| \nonumber\\
&\quad +\frac{\|\tilde{h}\|}{\sigma^2}\bigg|\frac{\mathrm{d}}{\mathrm{d}x}\bigg(\frac{e^{-\beta x}I_{\nu }(\alpha x)}{x^{\nu}}\bigg) \int_x^{\infty} e^{\beta y}y^{\nu} K_{\nu}(\alpha y)\,\mathrm{d}y\bigg|.  \nonumber 
\end{align}

We now note the following bounds, which follow almost immediately from Theorems 3.1 and 3.2 of \cite{gaunt}.  Fix $\nu>-\frac{1}{2}$ and $0<|\beta|<\alpha$.  Then, for all $x>0$,
\begin{eqnarray*}\bigg|\frac{e^{-\beta x} K_{\nu}(\alpha x)}{x^{\nu}}\int_0^{x} e^{\beta y}y^{\nu} I_{\nu}(\alpha y)\,\mathrm{d}y \bigg| &<& \frac{2}{\alpha(2\nu+1)}, \\
\bigg|\frac{e^{-\beta x}I_{\nu}(\alpha x)}{x^{\nu}} \int_x^{\infty} e^{\beta y}y^{\nu} K_{\nu}(\alpha y)\,\mathrm{d}y\bigg| &<& \frac{K(\nu,\gamma)}{\alpha}, \\
\bigg|\frac{\mathrm{d}}{\mathrm{d}x}\bigg(\frac{e^{-\beta x}K_{\nu}(\alpha x)}{x^{\nu}}\bigg)\int_0^x e^{\beta y}y^{\nu} I_{\nu}(\alpha y)\,\mathrm{d}y \bigg| &<& \frac{2(\gamma+1)}{2\nu+1}, \\
\bigg|\frac{\mathrm{d}}{\mathrm{d}x}\bigg(\frac{e^{-\beta x}I_{\nu }(\alpha x)}{x^{\nu}}\bigg) \int_x^{\infty} e^{\beta y}y^{\nu} K_{\nu}(\alpha y)\,\mathrm{d}y\bigg|&<& K(\nu,\gamma),
\end{eqnarray*}
where $\gamma=\beta/\alpha$ and
\begin{equation*}K(\nu,\gamma)=\begin{cases} \displaystyle \frac{\sqrt{\pi}}{(1-\gamma^2)^{\nu+1/2}}\frac{\Gamma\left(\nu+\frac{1}{2}\right)}{\Gamma(\nu+1)}, & \:  \nu\geq\frac{1}{2}, \\
\displaystyle \frac{6\Gamma(\nu+\frac{1}{2})}{1-|\gamma|}, & \:   |\nu|<\frac{1}{2}. \end{cases}
\end{equation*}
Applying these inequalities to bound the above expressions for $|f(x)|$ and $|f'(x)|$, and recalling that it suffices to bound the solution for $x\geq0$ leads to the following bounds, which hold for all $x\in\mathbb{R}$,
\begin{eqnarray*}|f(x)|&\leq& \frac{\|\tilde{h}\|}{\sigma^2\alpha}\bigg(\frac{2}{2\nu+1}+K(\nu,\gamma)\bigg), \\
|f'(x)|&\leq& \frac{\|\tilde{h}\|}{\sigma^2}\bigg(\frac{2(\gamma+1)}{2\nu+1}+K(\nu,\gamma)\bigg).
\end{eqnarray*}
Using (\ref{parameter}) to return to the original parameters yields (\ref{vgsolnunibound}) and (\ref{vgsolnunibound1}), as required. \hfill $\square$

\subsubsection{Density proportional to $e^{-x^4/12}$}

We first prove the bounds \eqref{dbounds1} and \eqref{dbounds2}. Note that the density $p$ and its Stein equation \eqref{411} satisfy the general Conditions 3.1 and 3.2 from the paper \cite{dobler beta}. Hence, 
by Proposition 3.13 (a) of that paper, we conclude that 
\begin{equation}\label{cwp1}
 |f(x)|\leq \frac{\int_x^\infty tp(t)\,\mathrm{d}t}{p(x)}\|h'\|
\end{equation}
holds for each $x\in\mathbb{R}$. Writing $\Phi$ for the standard normal distribution function and $\varphi=\Phi'$ for its density, it is an easy matter of partial integration to check that 
\begin{equation}\label{repp}
 p(x)=c_1\sqrt{2\pi}\varphi\bigg(\frac{x^2}{\sqrt{6}}\bigg)\quad\text{and}\quad\int_x^\infty tp(t)\,\mathrm{d}t=c_1\sqrt{3\pi}\bigg(1-\Phi\bigg(\frac{x^2}{\sqrt{6}}\bigg)\bigg),
\end{equation}
where we denote by 
\begin{equation*}
 c_1:=\frac{\sqrt{2}}{3^{1/4}\Gamma(1/4)}
\end{equation*}
the normalizing constant of $p$. Hence, from \eqref{cwp1} we obtain the bound 
\begin{equation}\label{cwp2}
 |f(x)|\leq\frac{\sqrt{6}}{2}\;\frac{1-\Phi\bigl(\frac{x^2}{\sqrt{6}}\bigr)}{\varphi\bigl(\frac{x^2}{\sqrt{6}}\bigr)}\|h'\|.
\end{equation}
The Gaussian Mill's ratio inequality states that, for each $t>0$, 
\begin{equation}\label{GMill}
 \frac{t}{1+t^2}\leq\frac{1-\Phi(t)}{\varphi(t)}\leq\min\bigg(\sqrt{\frac{\pi}{2}},\,\frac{1}{t}\bigg).
\end{equation}
Hence, from \eqref{cwp2} and \eqref{GMill} we conclude that 
\begin{equation}\label{cwp3}
 |f(x)|\leq\min\bigg(\frac{\sqrt{3\pi}}{2},\,\frac{3}{x^2}\bigg)\|h'\|,
\end{equation}
which already gives the first bound in \eqref{dbounds1} and \eqref{dbounds2}, respectively. For the bounds on $f'$ we use the once iterated Stein equation \eqref{cwi1} together with the first bound in \eqref{elbounds} and 
\eqref{cwp3} to obtain
\begin{align}
 |f'(x)|&\leq\min\left(\frac{3^{1/4}\Gamma(1/4)}{2\sqrt{2}},\,\frac{3}{|x|^3}\right)|h'(x)+x^2f(x)|\notag\\
 &\leq\min\left(\frac{3^{1/4}\Gamma(1/4)}{2\sqrt{2}},\,\frac{3}{|x|^3}\right)\|h'\|+\min\left(\frac{3^{1/4}\Gamma(1/4)}{2\sqrt{2}},\,\frac{3}{|x|^3}\right)|x^2f(x)|\notag\\
 &\leq\min\left(\frac{3^{1/4}\Gamma(1/4)}{2\sqrt{2}},\,\frac{3}{|x|^3}\right)\|h'\|+3\min\left(\frac{3^{1/4}\Gamma(1/4)}{2\sqrt{2}},\,\frac{3}{|x|^3}\right)\|h'\|\notag\\
 &=\min\left(2^{1/2}3^{1/4}\Gamma(1/4),\,\frac{12}{|x|^3}\right)\|h'\|,
\end{align}
as claimed. In order to prove the bound on $f''$, we rely on the following representation of the second derivative of the Stein equation within the so-called density approach of Stein's method:
\begin{equation}\label{repf2}
 f''(x)=h'(x)+\frac{U(x)}{p(x)}\int_{-\infty}^xF(s)h'(s)\,\mathrm{d}s+\frac{V(x)}{p(x)}\int_x^\infty\bigl(1-F(s)\bigr)h'(s)\,\mathrm{d}s,
\end{equation}
where $F$ is the distribution function corresponding to $p$ and 
\begin{eqnarray*}
 U(x)&=&-\bigl(1-F(x)\bigr)\bigg(x^2+\frac{x^6}{9}\bigg)+\frac{x^3}{3}p(x),\\
 V(x)&=&-F(x)\bigg(x^2+\frac{x^6}{9}\bigg)-\frac{x^3}{3}p(x).
\end{eqnarray*}
Identity \eqref{repf2} is fairly standard in Stein's method. A proof can, for instance, be found in the arXiv version of the paper \cite{doblersrw}. 
With the help of the Mill's ratio type inequalities (5.51)--(5.54) for the distribution corresponding to $p$ from \cite{el10}, one can easily see that both $U$ and $V$ are non-positive functions. 
Using the fact that 
\begin{align*}
 \int_{-\infty}^xF(s)\,\mathrm{d}s=xF(x)-\int_{-\infty}^xsp(s)\,\mathrm{d}s=xF(x)+\int_x^\infty sp(s)\,\mathrm{d}s
 \end{align*}
and
\begin{align*}
 \int_x^\infty\bigl(1-F(s)\bigr)\,\mathrm{d}s=-x\bigl(1-F(x)\bigr)+\int_x^\infty sp(s)\,\mathrm{d}s
\end{align*}
we conclude that 
\begin{align*}
|f''(x)|&\leq \|h'\|+\|h'\| \left(\frac{-U(x)}{p(x)}\int_{-\infty}{x}F(s)\,\mathrm{d}s+\frac{-V(x)}{p(x)}\int_x^\infty\bigl(1-F(s)\bigr)\,\mathrm{d}s\right)\\
 &=\|h'\|+\|h'\|\bigg(\bigg(-\frac{x^3}{3}+\frac{\bigl(1-F(x)\bigr)\bigl(x^2+\frac{x^6}{9}\bigr)}{p(x)}\bigg)\bigg(xF(x)+\int_x^\infty sp(s)\,\mathrm{d}s\bigg)\\
 &\quad+\bigg(\frac{x^3}{3}+\frac{F(x)\big(x^2+\frac{x^6}{9}\big)}{p(x)}\bigg)\bigg(-x\big(1-F(x)\big)+\int_x^\infty sp(s)\,\mathrm{d}s\bigg)\\
 &=\|h'\|+\|h'\|\left(-\frac{x^4}{3}+\bigg(x^2+\frac{x^6}{9}\bigg)\frac{\int_x^\infty sp(s)\,\mathrm{d}s}{p(x)}\right)\\
 &\leq\|h'\|+\|h'\|\left(-\frac{x^4}{3}+\bigg(x^2+\frac{x^6}{9}\bigg)\frac{3}{x^2}\right)\\
 &=4\|h'\|,
\end{align*}
where we used \eqref{repp} and \eqref{GMill}, again, to obtain the final inequality. Using the second bound in \eqref{elbounds} for the iterated Stein equation \eqref{cwi1} as well as \eqref{cwp3}, we easily obtain
\begin{align*}
 |f''(x)|&\leq 2|h'(x)+x^2f(x)|\leq2\|h'\|+2\cdot 3\|h'\|=8\|h'\|,
\end{align*}
which is a very simple derivation as compared to that of the previous bound.

Next, we prove \eqref{genelbound1}, \eqref{inian} and \eqref{recan} by induction on $n$. We omit the cases $n=0,1,2$ and assume that $n\geq3$ and that the claim holds for all $0\leq k<m$. It is easily checked that 
\begin{equation}\label{prefac}
 |x|\min\bigg(\frac{1}{2c_1},\,\frac{3}{|x|^3}\bigg)\leq \frac{3}{\bigl(6c_1\bigr)^{2/3}}\quad\text{and}\quad x^2\min\bigg(\frac{1}{2c_1},\,\frac{3}{|x|^3}\bigg)\leq \frac{3}{\bigl(6c_1\bigr)^{1/3}}.
\end{equation}
Hence, from \eqref{elbounds} applied to $f_n$, from the definition of $h_n$ and the induction hypothesis we conclude that
\begin{align*}
 |f^{(n)}(x)|&\leq \min\bigg(\frac{1}{2c_1},\,\frac{3}{|x|^3}\bigg)\|h_n\|\\
 &\leq  \min\bigg(\frac{1}{2c_1},\,\frac{3}{|x|^3}\bigg)\bigg(\|h^{(n)}\| +\frac{n(n-1)(n-2)}{6c_1}\sum_{j=0}^{n-3}a^{(n-3)}_{n-3-j}\|h^{(n-3-j)}\|\\
&\quad+n(n-1)\frac{3}{\bigl(6c_1\bigr)^{2/3}}\sum_{j=0}^{n-2}a^{(n-2)}_{n-2-j}\|h^{(n-2-j)}\|+\frac{3n}{\bigl(6c_1\bigr)^{1/3}}\sum_{j=0}^{n-1}a^{(n-1)}_{n-1-j}\|h^{(n-1-j)}\|\bigg).
\end{align*}
Now, collecting terms and using the induction hypothesis on \eqref{inian} we obtain
\begin{align*}
|f^{(n)}(x)|&\leq \min\bigg(\frac{1}{2c_1},\,\frac{3}{|x|^3}\bigg)\Biggl(\|h^{(n)}\|+\frac{3n}{\bigl(6c_1\bigr)^{1/3}}\|h^{(n-1)}\|\\
&\quad+\left(\frac{3n}{\bigl(6c_1\bigr)^{1/3}}a^{(n-1)}_{n-2}+\frac{3n(n-1)}{\bigl(6c_1\bigr)^{2/3}}a_{n-2}^{(n-2)}\right)\|h^{(n-2)}\|\\
&\quad+\sum_{k=3}^n\biggl(\frac{3n}{\bigl(6c_1\bigr)^{1/3}}a_{n-k}^{(n-1)}+\frac{3n(n-1)}{\bigl(6c_1\bigr)^{2/3}}a_{n-k}^{(n-2)}+\frac{n(n-1)(n-2)}{6c_1}a_{n-k}^{(n-3)}\biggr)\|h^{(n-k)}\|\Biggr)\\
&=\min\bigg(\frac{1}{2c_1},\,\frac{3}{|x|^3}\bigg)\Biggl(\|h^{(n)}\|+\frac{3n}{\bigl(6c_1\bigr)^{1/3}}\|h^{(n-1)}\| +\frac{12n(n-1)}{\bigl(6c_1\bigr)^{2/3}}\|h^{(n-2)}\|\\
&\quad+\sum_{k=3}^n\biggl(\frac{3n}{\bigl(6c_1\bigr)^{1/3}}a_{n-k}^{(n-1)}+\frac{3n(n-1)}{\bigl(6c_1\bigr)^{2/3}}a_{n-k}^{(n-2)}+\frac{n(n-1)(n-2)}{6c_1}a_{n-k}^{(n-3)}\biggr)\|h^{(n-k)}\|\Biggr).
\end{align*}
Note that, in passing, we have also proved \eqref{hnbound} and, now, \eqref{elbounds} yields \eqref{genelbound2}. 

\subsubsection{Multivariate normal distribution}

The iterative method, which has been presented mostly for linear ODEs in this paper, applies equally to linear PDEs.  Indeed, we can obtain bounds for $n$-th order partial derivatives of the solution of the multivariate normal Stein equation if we are given just the bound (\ref{mvnbound}).  For simplicity, we consider the standard multivariate normal distribution with covariance matrix $\Sigma=I_d$, the $d\times d$ identity matrix.  Recall that, for $k\geq1$,
\begin{equation*}L\bigg(\frac{\partial^k f(x)}{\prod_{j=1}^k\partial x_{i_j}}\bigg)=\frac{\partial^kh(x)}{\prod_{j=1}^k\partial x_{i_j}}+k\frac{\partial^kf(x)}{\prod_{j=1}^k\partial x_{i_j}}.
\end{equation*}
Then applying the iterative technique with the bound (\ref{mvnbound}) gives
\begin{align*}\bigg\|\frac{\partial^2 f}{\partial x_i\partial x_j}\bigg\|&\leq\sqrt{\frac{\pi}{2}}\min_{l\in\{i,j\}}\bigg\|\frac{\partial h(x)}{\partial x_l}+\frac{\partial h(x)}{\partial x_l}\bigg\|\leq\sqrt{\frac{\pi}{2}}\min_{l\in\{i,j\}}\bigg\{\bigg\|\frac{\partial h}{\partial x_l}\bigg\|+\bigg\|\frac{\partial f}{\partial x_l}\bigg\|\bigg\}\\
& \leq \sqrt{\frac{\pi}{2}}\min_{l\in\{i,j\}}\bigg\|\frac{\partial h}{\partial x_l}\bigg\|+\frac{\pi}{2}\|\tilde{h}\|.
\end{align*}
Repeating this procedure allows us to bound partial derivatives of $f$ of any order.  However, the resulting bound would be much worse than the best bounds from the literature.

\section{Connection between Stein equations and Poisson equations}

If $L$ in equation \eqref{steineqn} takes the form of a second order (possibly degenerate)
elliptic differential operator, then the Stein equation coincides with
the Poisson equation
\begin{equation}
Lf=h-\mu(h).\label{eq:poisson}
\end{equation}
There is a large literature on existence and regularity results for
elliptic differential equations, see \cite{evans2010} and \cite{GilbargTrudinger}
as well as references thereof and therein. Most of these results focus
on bounded domains $D\subsetneq\mathbb{R}^{d}.$ However, \cite{pardoux}
establishes regularity results for $D=\mathbb{R}^{d}.$ We concentrate
on presenting the formal relation before summarising and exploiting
the results of \cite{pardoux} in Section \ref{sub:ResultsPardoux}.

These regularity results are achieved by studying the SDE
\begin{equation}
\mathrm{d}X_{t}=b(X_{t})\,\mathrm{d}t+\sigma(X_{t})\,\mathrm{d}B_{t},\label{eq:diffusion}
\end{equation}
where $B_t$ is $d_1$-dimensional Brownian motion, that has 
\begin{equation*}
L\phi(x)=\sum_{i=1}^{d}b_{i}(x)\frac{\partial\phi}{\partial x_{i}}+\frac{1}{2}\sum_{i,j=1}^{d}a_{ij}(x)\frac{\partial^{2}\phi}{\partial x_{i}\partial x_{j}},\quad a(x)=\sigma\sigma^{*}(x)\label{eq:generator}
\end{equation*}
as its infinitesimal generator. In the Stein equation literature this
is known as generator approach which was introduced by Barbour
and G\"{o}tze \cite{barbour2}, \cite{gotze}, but instead of looking
at a case by case basis we establish a general connection that has
to best of our knowledge not been exploited in the literature. However, this connection has been hinted at in Section 6.2 of \cite{2010MattinglyPoisson} without making the connection with regularity results of \cite{pardoux}. The iteration of the regularity results of \cite{pardoux} similar to results presented below has been used in \cite{teh2015sgldB} to analyse MCMC algorithms with subsampling. 
Recently, in \cite{Gurvich2014} and in \cite{DaiBrav}, the generator approach of Stein's method for certain classes of diffusion processes has been used for applications related to queuing theory. In fact,
\cite{Gurvich2014} seems to re-develop this technique, being unaware of Barbour's and G\"otze's previous insights. 

Stein's method can be viewed as a quantitative version of the Echeverr\'{i}­a-Weiss
theorem (more details about this theorem can be found in Sections
4.1-4.2 in \cite{lamberton2002recursive} and in \cite{kurtz}). 
\begin{theorem}
(Echeverr\'{i}­a-Weiss theorem)\label{prop:EcheverriSpecial} Suppose
there is a probability density $\mu$ on $\mathbb{R}^{d}$ such that,
for all $\varphi\in C_{c}^{2}$, 
\[
\int_{\mathbb{R}^{d}}L\varphi(x)\mu(x)\,\mathrm{d}x=0.
\]
Then $\mu$ is an invariant distribution for the SDE in equation (\ref{eq:diffusion}). 
\end{theorem}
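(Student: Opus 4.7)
The plan is to relate the algebraic hypothesis $\int L\varphi\,d\mu=0$ to the dynamical statement $\mu P_t=\mu$, where $P_tf(x)=\mathbb{E}_x[f(X_t)]$ is the transition semigroup of the SDE \eqref{eq:diffusion}. Assuming enough regularity of $b,\sigma$ to ensure well-posedness of the martingale problem (e.g.\ the standing assumptions of \cite{pardoux} invoked in Section \ref{sub:ResultsPardoux}), for any $\varphi\in C_c^2$, It\^o's formula shows that
$$M_t^\varphi := \varphi(X_t)-\varphi(X_0)-\int_0^t L\varphi(X_s)\,\mathrm{d}s$$
is a martingale, so taking $\mathbb{E}_x$ yields the Kolmogorov identity $P_t\varphi(x)-\varphi(x)=\int_0^t P_sL\varphi(x)\,\mathrm{d}s$. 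Integrating this against $\mu(x)\,\mathrm{d}x$ and interchanging order of integration by Fubini (justified by boundedness of $L\varphi$ and the Feller-type regularity of $P_s$) gives
$$\int P_t\varphi\,\mathrm{d}\mu - \int\varphi\,\mathrm{d}\mu = \int_0^t\int P_sL\varphi(x)\,\mu(x)\,\mathrm{d}x\,\mathrm{d}s.$$

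The crux of the proof is to argue that the inner integral $\int P_sL\varphi\,\mathrm{d}\mu$ vanishes for every $s\geq 0$, and this is the main obstacle: the hypothesis only directly controls $\int L\psi\,\mathrm{d}\mu$ for $\psi\in C_c^2$, while $P_sL\varphi$ is in general a bounded smooth function that need not be compactly supported. The strategy is to extend the class of admissible test functions by approximation. Choose cut-off functions $\chi_n\in C_c^\infty(\mathbb{R}^d)$ that are $1$ on the ball of radius $n$ with $\|\nabla\chi_n\|,\|\nabla^2\chi_n\|\to 0$, apply the hypothesis to $\chi_n\cdot\varphi$ rather than $\varphi$ directly, and use dominated convergence together with the tightness of $\mu$ (which is a probability density) and the growth bounds on $b,\sigma$ to pass to the limit. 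Equivalently, and this is the cleaner route I would prefer, one can introduce the resolvent $R_\lambda f=\int_0^\infty e^{-\lambda t}P_tf\,\mathrm{d}t$ and exploit the identity $(\lambda-L)R_\lambda f=f$: for $f\in C_c^\infty$, $R_\lambda f$ lies in the domain of $L$ in the $C_0$ sense, and approximating $R_\lambda f$ by elements of $C_c^2$ together with the hypothesis yields $\lambda\int R_\lambda f\,\mathrm{d}\mu=\int f\,\mathrm{d}\mu$, which translates into invariance after Laplace inversion.

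Once $\int P_sL\varphi\,\mathrm{d}\mu=0$ is established for all $s\geq 0$ and $\varphi\in C_c^2$, the displayed identity collapses to $\int P_t\varphi\,\mathrm{d}\mu=\int\varphi\,\mathrm{d}\mu$. A standard density argument (monotone class / density of $C_c^2$ in $C_0$ under the Feller topology) extends this equality to all bounded continuous $\varphi$, which is precisely the statement $\mu P_t=\mu$, so $\mu$ is invariant for the SDE \eqref{eq:diffusion}. The approximation step in the middle paragraph is where essentially all of the analytic content sits; everything else is a packaging of It\^o's formula and Fubini.
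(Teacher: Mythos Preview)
The paper does not prove this theorem at all: it is quoted as a background result, with a pointer to Sections~4.1--4.2 of \cite{lamberton2002recursive} and to \cite{kurtz} for details. So there is no ``paper's own proof'' to compare your argument against; what you have written is essentially the standard semigroup/martingale-problem route that those references develop.

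Your outline is sound in structure, but there is one slip worth fixing. In the cut-off step you write ``apply the hypothesis to $\chi_n\cdot\varphi$''. Since $\varphi\in C_c^2$ already, $\chi_n\varphi=\varphi$ for large $n$ and this gives nothing. What you actually need is to extend the identity $\int L\psi\,\mathrm{d}\mu=0$ from $\psi\in C_c^2$ to $\psi$ in the full $C_0$-domain of $L$, and then apply it to $\psi=P_s\varphi$ (equivalently, to $\psi=R_\lambda f$ in your resolvent variant). The cut-off should therefore be applied to $P_s\varphi$ (or $R_\lambda f$), not to $\varphi$: one writes $L(\chi_n\psi)=\chi_n L\psi+a\nabla\chi_n\cdot\nabla\psi+\psi L\chi_n$ and checks that the error terms vanish as $n\to\infty$ when integrated against $\mu$. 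This is exactly where the growth assumptions on $b$ and the boundedness of $\sigma\sigma^*$ enter, and under the standing hypotheses of Section~\ref{sub:ResultsPardoux} (in particular \eqref{eq:assPardouxDrift}) the passage to the limit is justified. With that correction, your resolvent formulation is the cleaner of the two and matches the argument in \cite{kurtz}; the remaining steps (Fubini, Laplace inversion, density of $C_c^2$) are routine, as you say.
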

This theorem illustrates the relation between the invariant/stationary distribution
of the associated stochastic process and the Stein differential operator.
Moreover, sufficiently fast convergence of the stochastic process
to its invariant distribution guarantees the existence of a solution
to the Poisson equation (\ref{eq:poisson}). 

\subsection{Solution of the Poisson equation}\label{sub:ResultsPardoux}

Theorem 1 and 2 in \cite{pardoux} characterise the
smoothness and growth of the solution to the Poisson equation associated
with equation (\ref{eq:diffusion}).  We now state a simplified version of these theorems, which we make use of in Section 4.2.  Before doing so, we state some assumptions.  We suppose that $b$ is a locally bounded Borel vector function of dimension $d_1\geq d$, that $\sigma$ is a $d\times d_1$ matrix-valued uniformly continuous function, and that $\sigma\sigma^*$ is bounded and positive-definite.  We also suppose that 
\begin{equation}\left\langle b(x),x/|x|\right\rangle   \leq -r|x|,\quad |x|\ge M_{0},\label{eq:assPardouxDrift}
\end{equation}
with $M_0\geq0$ and $r>0$.  This condition prevents the solution of the SDE (\ref{eq:diffusion})
from blowing up, so that the process $\{X_t\}$
 is well-defined for all $t > 0$.

\begin{theorem}
\label{thm:pardouxPoisson}\cite{pardoux}\  Suppose that the above assumptions hold, and that there also exist $C>0$ and $\beta\geq0$ such that
\begin{equation}
\left|h(x)-\mu(h)\right|\leq C\left(1+|x|\right)^{\beta}.\label{eq:poissonhbound1}
\end{equation}
Then there exists a solution $f$, belonging to the Sobolev class $W_{p,\mathrm{loc}}^{2}$ for any $p>1$, to the Poisson equation (\ref{eq:poisson}).

\begin{enumerate}
\item If $\beta<0$, then $f$ is bounded. Moreover, 
\[
\sup_x\left|f(x)\right|\leq C^{\prime}(\beta)\sup_x\big|(h(x)-\mu(h))\left(1+|x|\right)^{-\beta}\big|,
\]
and there exists a constant $ C^{\prime\prime}(\beta)$ such that
\[
|\nabla f(x)|\leq C^{\prime\prime}(\beta)\big(1+\left|x\right|^{\beta}\big).
\]

\item Suppose now that $\beta>0$.  Then there exists $C^{\prime}(\beta)$ such that 
\[
|f(x)|\leq C^{\prime}(\beta)\left(1+|x|\right)^{\beta}.
\]
Finally, there exists $ C^{\prime\prime}(\beta)$ such that 
\[
|\nabla f(x)|\leq C^{\prime\prime}(\beta)\left(1+|x|^{\beta}\right).
\]

\end{enumerate}
In both cases $C^{\prime}(\beta)$, $C^{\prime\prime}(\beta) \rightarrow\infty$ as $\left|\beta\right|\rightarrow0.$
\end{theorem}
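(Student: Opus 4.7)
The natural approach is via the probabilistic representation
\[
f(x) = -\int_0^\infty \bigl(\mathbb{E}_x[h(X_t)] - \mu(h)\bigr)\, \mathrm{d}t,
\]
where $X_t$ is the diffusion (\ref{eq:diffusion}) started at $x$. The plan is to verify that this integral converges under the hypotheses, that it defines a strong solution of $Lf = h - \mu(h)$, and then to extract the growth and regularity bounds.

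The key analytical input is that the drift condition (\ref{eq:assPardouxDrift}) supplies a Lyapunov function of the form $V_\gamma(x) = (1+|x|^2)^\gamma$ for any $\gamma > 0$. An It\^o/generator computation using $\langle b(x), x \rangle \leq -r|x|^2$ for $|x| \geq M_0$ together with the boundedness of $\sigma\sigma^*$ shows that $LV_\gamma(x) \leq -c_\gamma V_\gamma(x) + C_\gamma$ outside a compact set. Combined with uniform ellipticity and the uniform continuity of $\sigma$, classical Harris / Meyn--Tweedie / coupling arguments then yield geometric ergodicity in the $V_\gamma$-norm, so that for any $\gamma \geq \beta_+ := \max(\beta,0)$,
\[
\bigl|\mathbb{E}_x h(X_t) - \mu(h)\bigr| \leq C_1 e^{-\lambda t}\bigl(1 + |x|^{\beta_+}\bigr),
\]
together with polynomial moment bounds $\mathbb{E}_x|X_t|^\beta \leq C_2(1+|x|^\beta)$ uniform in $t$. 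Combined with the growth assumption (\ref{eq:poissonhbound1}), this makes the defining integral for $f$ absolutely convergent. A Fubini argument applied to the Kolmogorov equation $\partial_t \mathbb{E}_x h(X_t) = L\,\mathbb{E}_x h(X_t)$ then yields $Lf = h - \mu(h)$, and the $W^{2}_{p,\mathrm{loc}}$ regularity is handed over to interior $L^p$ elliptic estimates, which apply because $\sigma\sigma^*$ is bounded and positive-definite.

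For the explicit bounds I would split the time integral at some $T>0$. When $\beta>0$, the integrand on $[0,T]$ is bounded by $C(1+\mathbb{E}_x|X_t|^\beta) \leq C'(1+|x|^\beta)$ via the moment estimate, while on $(T,\infty)$ the exponential ergodicity estimate applies; optimising over $T$ yields $|f(x)| \leq C'(\beta)(1+|x|)^\beta$. When $\beta<0$, the function $h-\mu(h)$ is already bounded, so on $[0,T]$ one writes
\[
|h(X_t) - \mu(h)| \leq \bigl\|(h - \mu(h))(1+|\cdot|)^{-\beta}\bigr\|_\infty (1+|X_t|)^\beta,
\]
and integrates the resulting uniformly bounded quantity, while on $(T,\infty)$ only the exponential tail matters; this is exactly what produces the weighted sup norm in the stated constant. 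The gradient bounds in both cases would follow from an interior $W^{2,p}$ Sobolev estimate on unit balls,
\[
|\nabla f(x)| \leq C\bigl(\|f\|_{L^\infty(B(x,1))} + \|h-\mu(h)\|_{L^\infty(B(x,1))}\bigr),
\]
combined with the already established growth of $f$.

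The main obstacle is tracking how the constants blow up as $|\beta| \to 0$: the weight $(1+|x|)^\beta$ degenerates to $1$, and the balance between the polynomial growth of $h$ and the geometric ergodicity rate becomes marginal, so the optimisation in $T$ ceases to be effective. Managing this dependence carefully, and in particular ensuring that $C'(\beta)$ and $C''(\beta)$ depend continuously on $\beta$ away from $0$ and diverge only as $|\beta|\to 0$, is the delicate part; it is what forces the statement to separate the regimes $\beta < 0$ and $\beta > 0$ rather than treat them uniformly.
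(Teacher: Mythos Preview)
This theorem is not proved in the paper at all: it is quoted from Pardoux and Veretennikov \cite{pardoux}, and the surrounding text only \emph{sketches} how the arguments in \cite{pardoux} proceed. So there is no full ``paper's own proof'' to compare against, only the outline the authors give after the statement.

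That outline is broadly consistent with your proposal. The paper records the formal semigroup solution $f(x)=-\int_0^\infty (P_t(h-\mu(h)))(x)\,\mathrm{d}t$ and the interior elliptic estimate
\[
|\nabla f(x)|\leq C\big(|f|_{L^\infty(B_{x,2})}+|Lf|_{L^\infty(B_{x,2})}\big),
\]
which is exactly the mechanism you invoke for the gradient bounds. For the growth bound on $f$ itself, however, the paper points out that \cite{pardoux} does \emph{not} rely solely on the infinite-time integral plus geometric ergodicity; rather, Theorem~\ref{thm:pardouxPoisson} is obtained from the stopping-time representation
\[
f(x)=\mathbb{E}_x f(X_{\tau^R})+\mathbb{E}_x\int_0^{\tau^R}\bigl(h(X_t)-\mu(h)\bigr)\,\mathrm{d}t
\]
together with It\^o's formula and the hypothesis (\ref{eq:poissonhbound1}). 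The paper also remarks that the Meyn--Tweedie total-variation bound $|P_t-\mu|_{TV}\leq C\,V(x)e^{-\lambda t}$ that your argument appeals to typically carries a Lyapunov weight $V(x)$ growing at least linearly in $|x|$, which is why the a~priori semigroup route does not by itself deliver the sharp exponents in the statement. Your plan is therefore correct in spirit, but the actual proof in \cite{pardoux} uses the stopping-time representation rather than your splitting-at-$T$ optimisation, and the paper under review contributes no independent argument.
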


Recall that $P_{t}f(x)=\mathbb{E}_{x}f(X_{t})$ is the semigroup associated
with $L$, that can also be expressed by $\int f(y)\mu_{t}^{x}(\mathrm{d}y)$
using the distribution $\mu_{t}^{x}$ of $X_{t}$ started at $x$.
On that basis we see that 
\begin{equation}
f(x)=-\int_{0}^{\infty}\left(P_{t}(h-\mu(h))\right)(x)\,\mathrm{d}t\label{eq:formalpoisson}
\end{equation}
is a formal solution to Poisson equation (\ref{eq:poisson}) by calculating
\begin{align*}
Lf(x) & =  -\int_{0}^{\infty}L\left(P_{t}(h-\mu(h))\right)(x)\,\mathrm{d}t=-\int_{0}^{\infty}\frac{\mathrm{d}}{\mathrm{d}t}\left(P_{t}(h-\mu(h))\right)(x)\,\mathrm{d}t\\
 & =  h(x)-\mu(h)-\lim_{t\rightarrow\infty}\left(P_{t}(h-\mu(h)\right)(x),
\end{align*}
where we use that the generator satisfies $LP_{t}=\frac{\mathrm{d}}{\mathrm{d}t}P_{t}$.
Thus, the existence of $f$ can be shown by establishing that
$\lim_{s\rightarrow\infty}-\int_{0}^{s}\left(P_{t}(h-\mu(h))\right)(x)\,\mathrm{d}t$
exists in an appropriate functions space and solves the Poisson equation
(\ref{eq:poisson}) in a weak sense. 

The regularity results comes from classical regularity results for
elliptic partial differential equation, more precisely from the following
inequality
\begin{equation}
\left|\nabla f(x)\right|\leq C\left(\left|f\right|_{L^{\infty}\left(B_{x},2\right)}+\left|Lf\right|_{L^{\infty}\left(B_{x},2\right)}\right),\label{eq:poisRegBound}
\end{equation}
where $B_{x,r}$ is a ball of radius $r$ around $x$; see p$.$ 1070
of \cite{pardoux}. The second term on the RHS is
just the RHS of the Poisson equation which is bounded by assumption.
The problem lies in the term $\left|f\right|_{L^{\infty}\left(B_{x},2\right)}.$
It is known that for typical classes of test functions $h$, like Lipschitz functions, the solution $f$ is not uniformly bounded on $\mathbb{R}^d$. 
However, such uniform bounds on $f$ would would be necessary in order to apply the iterative procedure. Nevertheless, in the next section we obtain bounds of the form
\[
\left|\partial^{\alpha}f\right|\leq C\bigg(1+|x|^{\phi(\left|\alpha\right|)}+\sum_{i=0}^{\left|\alpha\right|}\left|\partial^{\beta_{i}}\left(h-\mu(h)\right)\right|\bigg),
\]
where we used multi-index notation for derivatives and $\left|\beta_{i}\right|=i$
and $\beta_{0}<\cdots<\beta_{\left|\alpha\right|}.$ This discussion
highlights that a bound on $\left|\nabla f\right|$ involving $\left|f\right|_{L^{\infty}\left(B_{x},2\right)}$
is too weak to obtain results like the classical plug-in theorems which are familiar from Stein's method for univariate distributions. However, even non-uniform bounds on $f$ with a polynomial order of growth can 
sometimes be of use, if one has control of certain moments of the random variable under consideration. This has been exemplified recently in \cite{Gurvich2014}, \cite{DaiBrav} and in \cite{gaunt normal}.

\begin{remark}
\emph{Even though the use of (\ref{eq:poisRegBound}) is fundamentally flawed,
we describe how bounds on $\left|f\right|_{L^{\infty}\left(B_{x},2\right)}$
are obtained in \cite{pardoux}}.\end{remark}
\begin{enumerate}
\item \textbf{a priori using a direct calculation following page 1068 of
\cite{pardoux} }

Using equation (\ref{eq:formalpoisson}), bounds can be obtained as
follows 
\begin{align*}
\left|f(x)\right| & \leq  \int_{0}^{\infty}\left|P_{t}(h-\mu(h))(x)\right|\,\mathrm{d}t\\
 & =  \int_{0}^{\infty}\|h\|\left|\int\frac{h(y)}{\|h\|}(\mu_{t}^{x}-\mu)(\mathrm{d}y)\right|\,\mathrm{d}t\\
 & \leq  \|h\|\int_{0}^{\infty}\left|\mu_{t}^{x}-\mu\right|_{TV}\,\mathrm{d}t.
\end{align*}
For most processes of interest the convergence rate of $\left|\mu_{t}^{x}-\mu\right|_{TV}$
depends on $x$. In case of the Langevin equation, we study below,
the bounds would be of the form 
\[
\left|P_{t}-\mu\right|_{TV}\leq CV(x)\exp\left(-\lambda t\right);
\]
see Theorem 6.1 of \cite{Meyn1993continuous3}. 
Typically, $1\le V(x)$ grows at least
linearly in $|x|$. This is not a deficiency of the method
because similar bounds are strict for the Ornstein-Uhlenbeck process. While the convergence to equilibrium thus plays a role in bounding $|f|$ it is not so clear how the convergence relates to bounds on derivatives of $f$.

\item \textbf{a posteriori using a representation} \textbf{formula }

The result in Theorem \ref{thm:pardouxPoisson} is proved using the
representation formula 
\[
f(x)=\mathbb{E}_{x}f\left(X_{\tau^{R}}\right)+\mathbb{E}_{x}\int_{0}^{\tau^{R}}\left(h(X_{t})-\mu(h)\right)\,\mathrm{d}t
\]
in combination with It\^{o}'s formula and (\ref{eq:poissonhbound1}). For more details see p$.$ 1070 of
\cite{pardoux}.

\end{enumerate}

\subsection{The over-damped Langevin SDE\label{sub:IteratingRegularityPoissonEquations}}

We consider the Langevin SDE in $\mathbb{R}^{d}$,

\begin{equation*}
\mathrm{d}X_{t}=\frac{1}{2}\nabla\log{\mu(X_{t})}\,\mathrm{d}t+\mathrm{d}W_{t},\label{eq:Langevin}
\end{equation*}
we see that $\mu$ is the invariant distribution by verifying that
it solves the corresponding Fokker-Planck equation ($L^{\star}\mu=0$,
where $L^{\star}$ is the adjoint operator to $L$). Matching the
coefficients to equation (\ref{eq:diffusion}), we see that 
\begin{equation*}
b(x)  =  \frac{1}{2}\nabla\log\mu(x)\quad \text{and} \quad
\sigma  =  I.
\end{equation*}
In order for Stein's method to be applicable, we need bounds on
higher derivatives as well. For this reason, we will iterate Theorem
\ref{thm:pardouxPoisson}. In order to do so, we note that the derivatives of $f$
can be expressed as solutions to Poisson equations with different RHSs:
\begin{eqnarray}
Lf & = & h-\mu(h),\label{eq:PoiDeriv0}\\
L\partial_{i}f & = & \partial_{i}h-\frac{1}{2}\nabla f\cdot\partial_{i}b,\label{eq:PoiDeriv1}\\
L\partial_{ij}f & = & \partial_{ij}h-\frac{1}{2}\nabla\partial_{j}f\cdot\partial_{i}b-\frac{1}{2}\nabla f\cdot\partial_{ij}b-\frac{1}{2}\nabla f\cdot\partial_{j}b,\label{eq:PoiDeriv2}\\
L\partial_{ijk}f & = & \partial_{ijk}h-\frac{1}{2}\nabla\partial_{jk}f\cdot\partial_{i}b-\frac{1}{2}\nabla\partial_{j}f\cdot\partial_{ik}b-\frac{1}{2}\nabla\partial_{k}f\cdot\partial_{ij}b\nonumber \\
 &  & -\frac{1}{2}\nabla f\cdot\partial_{ijk}b-\frac{1}{2}\nabla\partial_{k}f\cdot\partial_{jk}b-\frac{1}{2}\nabla f\cdot\partial_{jk}b-\frac{1}{2}\nabla f\cdot\partial_{k}b. \label{eq:PoiDeriv3}
\end{eqnarray}
We will denote by $\beta_{f,i}$ numbers that satisfy 
\begin{equation}
\sup_{\left|\alpha\right|=i}|\partial^{\alpha}f(x)|\lesssim\big(1+|x|^{\beta_{f,i}}\big),\label{eq:betanotation}
\end{equation}
where we used multi-index notation for derivatives. We use a similar
notation for the derivatives of $f$, that is $\beta_{f,i}$ and assume
that these bounds are a priori given.

Using Theorem \ref{thm:pardouxPoisson} we can obtain $\beta_{f,i}$ to
satisfy (\ref{eq:betanotation}) in terms of the $\beta$'s,
which we formulate as the following proposition.
\begin{proposition}
Suppose that $h$ and its derivatives up to third order are bounded, and that assumption (\ref{eq:assPardouxDrift})
holds. Then, for any $\epsilon>0$, 
\begin{align*}&\beta_{f,0}  =  \epsilon, \quad \beta_{f,1}  =  \epsilon, \quad \beta_{f,2}  =  \epsilon\vee\frac{\beta_{b,1}}{2}, \quad \beta_{f,3}  =  \epsilon\vee\beta_{b,1}\vee\frac{\beta_{b,2}}{2} \\
&\text{and}\quad \beta_{f,4}  =  \epsilon\vee\frac{1}{2}\left(3\beta_{b,1}\vee\left(\beta_{b,1}+\beta_{b,2}\right)\vee\beta_{b,3}\right)
\end{align*}
satisfy equation (\ref{eq:betanotation}).\end{proposition}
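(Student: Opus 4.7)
The plan is to prove the proposition by induction on $i$, at each stage applying Theorem~\ref{thm:pardouxPoisson} to the Poisson equations (\ref{eq:PoiDeriv0})--(\ref{eq:PoiDeriv3}) satisfied by the successive derivatives of $f$, and combining this with a local elliptic interpolation inequality to produce the $\tfrac{1}{2}$ factors appearing in $\beta_{f,2}$, $\beta_{f,3}$ and $\beta_{f,4}$.

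For the base case, the RHS $h - \mu(h)$ of (\ref{eq:PoiDeriv0}) is bounded, so it trivially satisfies $|h(x) - \mu(h)| \le C(1+|x|)^\epsilon$ for every $\epsilon > 0$. Because the constants $C'(\beta)$ and $C''(\beta)$ in Theorem~\ref{thm:pardouxPoisson} diverge as $|\beta| \to 0$, we must apply the theorem with a strictly positive exponent $\beta = \epsilon$ rather than $\beta = 0$. This immediately yields $|f(x)| \lesssim (1+|x|)^\epsilon$ and $|\nabla f(x)| \lesssim 1 + |x|^\epsilon$, establishing $\beta_{f,0} = \beta_{f,1} = \epsilon$ and discharging the first two cases of the proposition.

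For the inductive step, given growth exponents $\beta_{f,0},\ldots,\beta_{f,i-1}$, we treat each $\partial^\alpha f$ with $|\alpha| = i-1$ as the unknown in its Poisson equation. The RHS is a sum of terms of the form $\nabla \partial^\gamma f\cdot \partial^\delta b$ (each with growth rate at most $\beta_{f,|\gamma|+1} + \beta_{b,|\delta|}$) together with a bounded derivative of $h$. A naive application of Theorem~\ref{thm:pardouxPoisson} yields the bound $|\nabla \partial^\alpha f|\lesssim 1+|x|^{\kappa}$ with $\kappa$ equal to this maximum, but this is too coarse to recover the claimed exponents. To sharpen, one uses the local elliptic interpolation estimate $|\nabla u(x)|\lesssim \|u\|_{L^\infty(B_{x,1})}^{1/2}\bigl(\|u\|_{L^\infty(B_{x,1})}+\|Lu\|_{L^\infty(B_{x,1})}\bigr)^{1/2}$, applied to $u = \partial^\alpha f$. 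This interpolates between the already-established polynomial bound on $\partial^\alpha f$ (with exponent $\beta_{f,i-1}$) and the polynomial growth rate of the RHS, producing an exponent equal to the average of the two; taking the maximum over all admissible pairs $(\gamma,\delta)$ then yields the stated values of $\beta_{f,2}$, $\beta_{f,3}$ and $\beta_{f,4}$.

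The main obstacle is the bookkeeping: one must identify, for each Poisson equation, which product of derivative growth rates dominates, verify that the interpolation produces precisely the halved combinations in the claimed formula for $\beta_{f,4}$, and keep $\epsilon$ small enough to be absorbed by every strictly positive exponent. A secondary technical point is confirming the centering condition $\int g_\alpha\,\mathrm{d}\mu = 0$ at each iteration (required to apply Theorem~\ref{thm:pardouxPoisson}); this follows from differentiating the stationarity identity $\int L u\,\mathrm{d}\mu = 0$ and using the rapid decay of the Langevin density $\mu$ at infinity to justify the resulting integrations by parts.
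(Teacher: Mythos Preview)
Your proposal is considerably more elaborate than the paper's proof, and the discrepancy you noticed is real.

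The paper's argument is simply to apply Theorem~\ref{thm:pardouxPoisson} successively to equations (\ref{eq:PoiDeriv0})--(\ref{eq:PoiDeriv3}), reading off the growth rate of the right-hand side at each stage and feeding it in as the exponent $\beta$. No interpolation inequality is invoked at any point. In particular, the paper's proof states explicitly that this yields $\beta_{f,2}\le \epsilon\vee\beta_{b,1}$, then $\beta_{f,3}\le 2\beta_{b,1}\vee\beta_{b,2}$, and finally $\beta_{f,4}\le 3\beta_{b,1}\vee(\beta_{b,1}+\beta_{b,2})\vee\beta_{b,3}$. These are the \emph{unhalved} exponents, and they do not match the factors of $\tfrac{1}{2}$ appearing in the proposition's statement. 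So the mismatch you identified between a ``naive'' iteration and the stated $\beta_{f,i}$ is not resolved in the paper; the paper simply records the coarser bounds in its proof and the statement appears to contain a typo (or an intended refinement that was never carried out).

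Your idea of using a local gradient interpolation estimate of the form
\[
|\nabla u(x)|\lesssim \|u\|_{L^\infty(B_{x,1})}^{1/2}\bigl(\|u\|_{L^\infty(B_{x,1})}+\|Lu\|_{L^\infty(B_{x,1})}\bigr)^{1/2}
\]
is a reasonable device for sharpening the exponent, and it does recover, for instance, $\beta_{f,2}\approx\beta_{b,1}/2$. It is, however, genuinely extra machinery compared to the paper, and the bookkeeping you describe does not land exactly on the claimed values in every parameter regime (e.g.\ when both $\beta_{b,1}$ and $\beta_{b,2}$ are positive the averaging can produce a slightly larger exponent than $\beta_{b,1}\vee\tfrac{\beta_{b,2}}{2}$). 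Your secondary concern about the centering condition is unnecessary for the paper's argument: since $\partial^\alpha f$ is already a fixed function with $L\partial^\alpha f=g_\alpha$ and $\mu$ is invariant, $\int g_\alpha\,\mathrm{d}\mu=0$ holds automatically; the paper does not verify it and does not need to.

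In short: your base case and iterative structure match the paper, but the interpolation step and the halving of exponents are your own addition. The paper's proof is the ``naive'' iteration you described, and it stops at the unhalved bounds.
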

\begin{proof}
Applying Theorem \ref{thm:pardouxPoisson} to equation (\ref{eq:PoiDeriv0})
implies that $\beta_{f,0}:=\beta_{f,1}:=\epsilon$ satisfies equation
(\ref{eq:betanotation}).  Applying Theorem \ref{thm:pardouxPoisson} to equation (\ref{eq:PoiDeriv1}) yields that $\beta_{f,2}\leq\epsilon\vee\beta_{b,1}$, and then applying the theorem to equation (\ref{eq:PoiDeriv2}) gives $\beta_{f,3}\leq2\beta_{b,1}\vee\beta_{b,2}$.  Finally, applying Theorem \ref{thm:pardouxPoisson} to equation (\ref{eq:PoiDeriv3}) yields that 
\begin{align*}
\beta_{f,4} & \leq  \left(\beta_{b,1}+\left(2\beta_{b,1}\vee\beta_{b,2}\right)\right)\vee\left(\beta_{b,1}+\beta_{b,2}\right)\vee\beta_{b,3}\\
 & \leq  3\beta_{b,1}\vee\left(\beta_{b,1}+\beta_{b,2}\right)\vee\beta_{b,3},
\end{align*}
completing the proof.
\end{proof}

\subsection*{Acknowledgements}
RG is supported by a Dame Kathleen Ollerenshaw Fellowship and acknowledges support from EPSRC grant EP/K032402/1. SV's research is partially supported by the grants EP/K009850/1 and EP/N000188/1.  We would like to thank the referees for a very thorough reading of our paper, and for their helpful suggestions.

\footnotesize

\appendix

\normalsize

\section{Further calculations}

For the sake of brevity, the following formulas were given in Section 3 without proof.  We provide their simple proofs here.

\subsection{Normal distribution}

We now prove the following inequality that is found in Section 3.2:
\begin{equation*}\|f^{(n)}\|\leq\sum_{j=0}^n\bigg(\frac{\pi}{2}\bigg)^{(n-j+1)/2}\frac{n!}{j!}\|\tilde{h}^{(j)}\|.
\end{equation*}

\begin{proof}Apply Lemma 2.3 with $a_k=k+1$ and $C_k=\sqrt{\frac{\pi}{2}}$ to get
\begin{align*}\|f^{(n)}\|&\leq\sqrt{\frac{\pi}{2}}\sum_{j=0}^n\bigg(\prod_{i=j}^{n-1}\sqrt{\frac{\pi}{2}}(i+1)\bigg)\|\tilde{h}^{(j)}\|\\
&=\sum_{j=0}^n\bigg(\frac{\pi}{2}\bigg)^{(n-1+j)/2}\bigg(\prod_{i=j+1}^ni\bigg)\|\tilde{h}^{(j)}\|\\
&=\sum_{j=0}^n\bigg(\frac{\pi}{2}\bigg)^{(n-j+1)/2}\frac{n!}{j!}\|\tilde{h}^{(j)}\|.
\end{align*}
\end{proof}

\subsection{Beta distribution}

\begin{equation}\label{3441}L_{\alpha,\beta}f(x)=x(1-x)f'(x)+(\alpha-(\alpha+\beta)x)f(x)=\tilde{h}(x)
\end{equation}
and, for $k\geq1$,
\begin{equation}\label{betasi1}L_{\alpha+k,\beta+k}f^{(k)}(x)=h^{(k)}(x)+k(\alpha+\beta)f^{(k-1)}(x).
\end{equation}

\begin{proof}We prove the result be induction.  Differentiating both sides of (\ref{betasi1}) gives
\begin{align*}&x(1-x)f^{(k+2)}(x)+(\alpha+k-(\alpha+\beta+2k)x)f^{(k+1)}(x)+(1-2x)f^{(k+1)}(x)\\
&\quad-(\alpha+\beta+2k)f^{(k)}(x)=h^{(k+1)}(x)+k(\alpha+\beta)f^{(k)}(x).
\end{align*}
Rearranging gives
\begin{align*}&x(1-x)f^{(k+2)}(x)+(\alpha+k+1-(\alpha\beta+2k+2)x)f^{(k+1)}(x)\\
&=h^{(k)}(x)+[(\alpha+\beta+2k)+k(\alpha+\beta+k-1)]f^{(k-1)}(x),
\end{align*}
and so
\begin{align*}
&x(1-x)f^{(k+2)}(x)+(\alpha+k+1-(\alpha\beta+2k+2)x)f^{(k+1)}(x)\\
&=h^{(k)}(x)+(k+1)(\alpha+\beta+k)f^{(k-1)}(x).
\end{align*}
We recognise the left-hand side as $L_{\alpha+k+1,\beta+k+1}f^{(k+1)}(x)$, and so the result has been proved by induction on $k$.
\end{proof}

\subsection{Student's $t$ distribution}

\begin{equation}\label{3551}L_{d,\delta}f(x)=(\delta^2+x^2)f'(x)-(d-1)xf(x)=\tilde{h}(x)
\end{equation}
and, for $k\geq1$,
\begin{equation}\label{tsteinop1}L_{d-2k,\delta}f^{(k)}(x)=h^{(k)}(x)+k(d-k)f^{(k-1)}(x).
\end{equation}

\begin{proof}Differentiating both sides of (\ref{3551}) gives
\begin{align*}&(\delta^2+x^2)f^{(k+2)}(x)-(d-2k-1)xf^{(k+1)}(x)+2xf^{(k+1)}(x)-(d-2k-1)f^{(k)}(x) \\
&=h^{(k+1)}(x)+k(d-k)f^{(k)}(x),
\end{align*}
which on rearranging gives
\begin{align*}L_{d-2(k+1),\delta}&=h^{(k+1)}(x)+[k(d-k)+(d-2k-1)]f^{(k)}(x) \\
&=h^{(k+1)}(x)+(k+1)(d-k-1)f^{(k)}(x),
\end{align*}
as required.
\end{proof}

\begin{equation*}\|f^{(n)}\|\leq \sum_{j=0}^n\bigg(\frac{\sqrt{\pi}}{2\delta}\bigg)^{n+1-j}\frac{n!}{j!}(d-n)_{n-j}A_j\|\tilde{h}^{(j)}\|,
\end{equation*}
where $A_j=\prod_{i=j}^n\frac{\Gamma(\frac{d}{2}-i)}{\Gamma(\frac{d+1}{2}-i)}$.

\begin{proof}Apply Lemma 2.3 with $a_k=(k+1)(d-k-1)$ and $C_k=\frac{\sqrt{\pi}\Gamma(\frac{d}{2}-k)}{2\delta\Gamma(\frac{d+1}{2}-k)}$ to get
\begin{align*}\|f^{(n)}\|&\leq C_n\sum_{j=0}^n\bigg(\sum_{i=j}^{n-1}C_i(i+1)(d-i-1)\bigg)\|\tilde{h}^{(j)}\|\\
&=\sum_{j=0}^n\bigg(\frac{\sqrt{\pi}}{2\delta}\bigg)^{n+1-j}A_j\bigg(\prod_{i=j+1}^n\bigg)\|\tilde{h}^{(j)}\|\\
&=\sum_{j=0}^n\bigg(\frac{\sqrt{\pi}}{2\delta}\bigg)^{n+1-j}\frac{n!}{j!}(d-n)_{n-j}A_j\|\tilde{h}^{(j)}\|.
\end{align*}
\end{proof}

\begin{equation*}\|f^{(2k+1)}\|\leq \sum_{j=0}^k\bigg(\frac{2}{\delta^2}\bigg)^{k-j+1}\frac{(2k)!!}{(2j)!!}(d-2k)_{k-j,2}\|\tilde{h}^{(2j)}\|
\end{equation*}
and
\begin{align*}\|f^{(2k)}\|&\leq \sum_{j=1}^k\bigg(\frac{2}{\delta^2}\bigg)^{k-j+1}\frac{(2k-1)!!}{(2j-1)!!}(d-2k+1)_{k-j,2}\|h^{(2j-1)}\|\\
&\quad+\frac{\sqrt{\pi}\Gamma(\frac{d}{2})}{2\delta\Gamma(\frac{d+1}{2})}\bigg(\frac{2}{\delta^2}\bigg)^{k}(2k-1)!!(d-2k+1)_{k,2}\|\tilde{h}\|.
\end{align*}

\begin{proof}Apply Lemma 2.3 with $a_k=(k+1)(d-k-1)$ and $C_k=\frac{2}{\delta^2}$ to get
\begin{align*}\|f^{(2k+1)}\|&\leq\frac{2}{\delta^2}\sum_{j=0}^k\bigg(\prod_{i=j}^{k-1}\frac{2}{\delta^2}(2i+2)(d-2i-2)\bigg)\|\tilde{h}^{(2j})\|\\
&=\sum_{j=0}^k\bigg(\frac{2}{\delta^2}^{k-j+1}\bigg(\prod_{i=j+1}^k(2i)(d-2i)\bigg)\|\tilde{h}^{(2j})\|\\
&=\sum_{j=0}^k\bigg(\frac{2}{\delta^2}\bigg)^{k-j+1}\frac{(2k)!!}{(2j)!!}(d-2k)_{k-j,2}\|\tilde{h}^{(2j)}\|
\end{align*}
and
\begin{align*}\|f^{(2k)}\|&\leq\frac{2}{\delta^2}\sum_{j=1}^k\bigg(\prod_{i=j}^{k-1}\frac{2}{\delta^2}(2i+1)(d-2i-1)\bigg)\|\tilde{h}^{(2j-1)}\|\\
&\quad+\frac{\sqrt{\pi}\Gamma(\frac{d}{2})}{2\delta\Gamma(\frac{d+1}{2})}\|\tilde{h}\|\cdot\prod_{i=1}^k\frac{2}{\delta^2}(2i-1)(d-2i+1)\\
&\leq \sum_{j=1}^k\bigg(\frac{2}{\delta^2}\bigg)^{k-j+1}\frac{(2k-1)!!}{(2j-1)!!}(d-2k+1)_{k-j,2}\|h^{(2j-1)}\|\\
&\quad+\frac{\sqrt{\pi}\Gamma(\frac{d}{2})}{2\delta\Gamma(\frac{d+1}{2})}\bigg(\frac{2}{\delta^2}\bigg)^{k}(2k-1)!!(d-2k+1)_{k,2}\|\tilde{h}\|.
\end{align*}
\end{proof}

\subsection{Inverse-gamma distribution}

\begin{equation}\label{3ii1}L_{\alpha,\beta}f(x)=x^2f'(x)+(\beta-(\alpha-1)x)f(x)=\tilde{h}(x)
\end{equation}
and, for $k\geq1$,
\begin{equation*}L_{\alpha-2k,\beta}f^{(k)}(x)=h^{(k)}(x)+k(\alpha-k)f^{(k-1)}(x).
\end{equation*}

\begin{proof}Differentiating both sides of (\ref{3ii1}) gives
\begin{align*}&x^2f^{(k+2)}(x)+(\beta-(\alpha-2k-1)x)f^{(k+1)}(x)+2xf^{(k+1)}(x)-(\alpha-2k-1)f^{(k)}(x)\\
&=h^{(k+1)}(x)+k(\alpha-k)f^{(k)}(x),
\end{align*}
which on rearranging gives
\begin{align*}L_{\alpha-2(k+1),\beta}f^{(k+1)}(x)&=h^{(k+1)}(x)+[k(\alpha-k)+(\alpha-2k-1)]f^{(k)}(x)\\
&=h^{(k+1)}(x)+(k+1)(\alpha-k-1)f^{(k)}(x),
\end{align*}
as required.
\end{proof}

\begin{equation*}\|f^{(n)}\|\leq \sum_{j=0}^n\frac{n!}{j!}(\alpha-n)_{n-j}B_j\|\tilde{h}^{(j)}\|,
\end{equation*}
where $B_j=\prod_{i=j}^n\frac{\Gamma(\alpha-2i)}{\beta}\left(\frac{e}{\alpha-2i-1}\right)^{\alpha-2i-1}$.

\begin{proof}Applying Lemma 2.3 with $a_k=(k+1)(\alpha-k-1)$ and $C_k=\frac{\Gamma(\alpha-2k)}{\beta}\bigg(\frac{\alpha}{\alpha-2k-1}\bigg)^{\alpha-2k-1}$ gives
\begin{align*}\|f^{(n)}\|&\leq C_k\sum_{j=0}^n\bigg(\prod_{i=j}^{n-1}C_i(i+1)(\alpha-i-1)\bigg)\|\tilde{h}^{(j)}\|\\
&=\sum_{j=0}^nB_j\bigg(\prod_{i=j+1}^ni(\alpha-i)\bigg)\|\tilde{h}^{(j)}\|\\
&=\sum_{j=0}^n\frac{n!}{j!}(\alpha-n)_{n-j}B_j\|\tilde{h}^{(j)}\|.
\end{align*}
\end{proof}

\subsection{PRR distribution}

\begin{equation}\label{3661}L_sf(x)=sf''(x)-xf'(x)-2(s-1)f(x)=\tilde{h}(x)
\end{equation}
and, for $k\geq1$,
\begin{equation*}L_sf^{(k)}(x)=h^{(k)}(x)+kf^{(k)}(x).
\end{equation*}

\begin{proof}Differentiating both sides of (\ref{3661}) gives
\begin{align*}L_sf^{(k+1)}(x)&=sf^{(k+3)}(x)-xf^{(k+2)}(x)-2(s-1)f^{(k+1)}(x)-f^{(k+1)}(x)\\
&=h^{(k+1)}(x)+kf^{(k+1)}(x),
\end{align*}
which on rearranging gives
\begin{align*}L_sf^{(k+1)}(x)=h^{(k+1)}(x)+(k+1)f^{(k+1)}(x),
\end{align*}
as required.
\end{proof}

\begin{equation*}\|f^{(n)}\|\leq (n-1)!\sum_{j=0}^{n-1}\frac{(2\pi)^{(n-j)/2}}{j!}\|h^{(j)}\|.
\end{equation*}

\begin{proof}Apply Lemma 2.4 with $a_k=k+1$ and $C_k=\sqrt{2\pi}$ to get
\begin{align*}\|f^{(n)}\|&\leq\sqrt{2\pi}\sum_{j=0}^{n-1}\bigg(\prod_{i=j}^{n-2}\sqrt{2\pi}(i+1)\bigg)\|h^{(j)}\|\\
&= (n-1)!\sum_{j=0}^{n-1}\frac{(2\pi)^{(n-j)/2}}{j!}\|h^{(j)}\|.
\end{align*}
\end{proof}

\begin{equation*}\|f^{(2k+1)}\|\leq \sqrt{2\pi}C^{k}(2k-1)!!\|h\|+ \sum_{j=1}^{k-1}C^{k-j+1}\frac{(2k-1)!!}{(2j-1)!!}\|h^{(2j-1)}\|
\end{equation*}
and
\begin{equation*}\|f^{(2k)}\|\leq  \sum_{j=0}^{k-1}C^{k-j+1}\frac{(2k)!!}{(2j)!!}\|h^{(2j)}\|,
\end{equation*}
where $C=4$ if $s=1/2$ and $C=2(\sqrt{\pi}s+s^{-1})$ if $s\geq 1$.

\begin{proof}Apply Lemma 2.4 with $a_k=k+1$ and $D_k=C$ to get
\begin{align*}\|f^{(2k+1)}\|&\leq C\sum_{j=1}^k\bigg(\prod_{i=j}^{k-1}C(2i+1)\bigg)\|h^{(2j-1)}\|+\sqrt{2\pi}\|h\|\prod_{i=1}^kC(2i-1)\\
&=\sqrt{2\pi}C^{k}(2k-1)!!\|h\|+ \sum_{j=1}^{k-1}C^{k-j+1}\frac{(2k-1)!!}{(2j-1)!!}\|h^{(2j-1)}\|
\end{align*}
and
\begin{align*}\|f^{(2k)}\|&\leq C\sum_{j=1}^{k-1}\bigg(\prod_{i=j}^{k-2}C(2i+2)\bigg)\|h^{(2j)}\| \\
&=\sum_{j=0}^{k-1}C^{k-j+1}\frac{(2k)!!}{(2j)!!}\|h^{(2j)}\|.
\end{align*}
\end{proof}

\subsection{Variance-gamma distribution}

\begin{equation}\label{3771}L_{r,\theta,\sigma}f(x)=\sigma^2xf''(x)+(\sigma^2r+2\theta x)f'(x)-(r\theta-x)f(x)=\tilde{h}(x)
\end{equation}
and for $k\geq1$, we have
\begin{equation*}L_{r+k,\theta,\sigma}f^{(k)}(x)=h^{(k)}(x)+kf^{(k-1)}(x)+k\theta f^{(k)}(x).
\end{equation*}

\begin{proof}Differentiating both sides of (\ref{3771}) gives
\begin{align*}&\sigma^2xf^{(k+3)}(x)+(\sigma^2(r+k)+2\theta x)f^{(k+2)}(x)+(r+k)\theta-x)f^{(k+1)}(x) \\
&\quad+\sigma^2f^{(k+2)}(x)+2\theta f^{(k+1)}(x)-f^{(k)}(x) \\
&=h^{(k+1)}(x)+kf^{(k)}(x)+k\theta f^{(k+1)}(x),
\end{align*}
which on rearranging gives
\begin{align*}L_{r+k+1,\theta,\sigma}f^{(k+1)}(x)=h^{(k+1)}(x)+(k+1)f^{(k)}(x)+(k\theta+\theta)f^{(k+1)}(x),
\end{align*}
as required.
\end{proof}

\begin{equation*}\|f^{(2k+1)}\|\leq \sum_{j=0}^k\frac{(2k)!!}{(2j)!!}\frac{1}{(r+2j+2)_{k-j+1,2}}\bigg(\frac{2}{\sigma^2}\bigg)^{k-j+1}\|\tilde{h}^{(2j)}\|.
\end{equation*}

\begin{proof}Apply Lemma 2.3 with $a_k=k+1$ and $D_k=\frac{2}{\sigma^2(r+k)}$ to get
\begin{align*}\|f^{(2k+1)}\|&\leq \frac{2}{\sigma^2(r+k)}\sum_{j=0}^k\bigg(\prod_{i=j}^{k-1}\frac{2}{\sigma^2(r+2i)}\cdot(2i+2)\bigg)\|\tilde{h}^{(2j)}\|\\
&=\sum_{j=0}^k\frac{(2k)!!}{(2j)!!}\frac{1}{(r+2j+2)_{k-j+1,2}}\bigg(\frac{2}{\sigma^2}\bigg)^{k-j+1}\|\tilde{h}^{(2j)}\|.
\end{align*}
\end{proof}

\begin{align*}\|f^{(2k)}\|&\leq \sum_{j=1}^k\frac{(2k-1)!!}{(2j-1)!!}\frac{1}{(r+2j+1)_{k-j+1,2}}\bigg(\frac{2}{\sigma^2}\bigg)^{k-j+1}\|h^{(2j-1)}\|\\
&\quad+\frac{(2k-1)!!2^k}{(r+1)_{k,2}\sigma^{2k+1}}\bigg(\frac{1}{r}+\frac{\pi\Gamma(\frac{r}{2})}{2\Gamma(\frac{r+1}{2})}\bigg)\|\tilde{h}\|.
\end{align*}
The proof is very similar to the one for $\|f^{(2k+1)}\|$.

\subsection{Density proportional to $e^{-x^4/12}$}

\begin{equation}\label{4111}
Lf(x)= f'(x)-\frac{x^3}{3}f(x)=\tilde{h}(x).
\end{equation}
We have
\begin{eqnarray}
Lf'(x)&=&h'(x)+x^2f(x)=:h_1(x),\label{cwi11} \\
Lf''(x)&=&h''(x)+2xf(x)+2x^2f'(x)=:h_2(x)\label{cwi21},
\end{eqnarray}
and, for $k\geq3$,
\begin{align}Lf^{(k)}(x)&=h_k(x):=h^{(k)}(x)+\frac{1}{3}k(k-1)(k-2)f^{(k-3)}(x)+k(k-1)xf^{(k-2)}(x)\nonumber\\
\label{cwik1}&\quad+kx^2f^{(k-1)}(x).
\end{align}

\begin{proof}Differentiating both sides of (\ref{4111}) gives
\begin{equation*}f''(x)-\frac{x^3}{3}f'(x)-x^2f(x)=h'(x),
\end{equation*}
which on rearranging yields (\ref{cwi11}).  Differentiating (\ref{cwi11}) gives that
\begin{equation*}f^{(3)}(x)-\frac{x^3}{3}f''(x)-x^2f'(x)=h''(x)+2xf(x)+x^2f'(x)+2xf(x),
\end{equation*}
which on rearranging yields (\ref{cwi21}).  We now use induction to prove (\ref{cwik1}) for $k\geq3$.  Differentiating both sides of (\ref{cwik1}) gives
\begin{align*}Lf^{(k+1)}&=x^2f^{(k)}(x)+h^{(k+1)}(x)+2kxf^{(k-1)}(x)+kx^2f^{(k)}(x)+k(k-1)f^{(k-2)}(x)\\
&\quad+k(k-1)xf^{(k-1)}(x)+\frac{1}{3}k(k-1)(k-2)f^{(k-2)} \\
&=h^{(k+1)}(x)+(k+1)x^2f^{(k)}(x)+[2k+k(k-1)]xf^{(k-1)}(x)+[k(k-1)\\
&\quad+\tfrac{1}{3}k(k-1)(k-2)]f^{(k-2)}(x) \\
&=h^{(k+1)}(x)+(k+1)x^2f^{(k)}(x)+k(k+1)xf^{(k-1)}(x)+[k(k-1)\\
&\quad+\frac{1}{3}(k+1)k(k-1)]f^{(k-2)}(x)=h_{k+1}(x),
\end{align*}
as required.  
\end{proof}

\end{document}